\newcommand{\circled}[2][]{%
  \tikz[baseline=(char.base)]{%
    \node[shape = circle, draw, inner sep = 0.5pt]
    (char) {\phantom{\ifblank{#1}{#2}{#1}}};%
    \node at (char.center) {\makebox[0pt][c]{#2}};}}
\definecolor{dkgreen}{rgb}{0,0.6,0}
\definecolor{gray}{rgb}{0.5,0.5,0.5}
\definecolor{mauve}{rgb}{0.58,0,0.82}
\theoremstyle{plain}
\newtheorem*{thm}{Theorem}
\newtheorem*{lem}{Lemma}
\newtheorem*{prop}{Proposition}
\newtheorem*{eg}{Example}
\theoremstyle{remark}
\newtheorem*{essay}{Essay}
\begin{document}

\begin{titlepage}
    \begin{center}
        \vspace*{1cm}
        \Huge
        \textbf{A Report on Subobject Classifiers and Monads}
        \vspace{3.0cm}
        \Large
        
        \textbf{Arnold Tan Junhan} 
        \vspace{2.0cm}
        \Large
        
        \textbf{Michaelmas 2018 Mini Projects: Categories Proofs and Processes}
        \vfill
        \vspace{0.8cm}
        \Large
        University of Oxford   
    \end{center}
\end{titlepage}
\tableofcontents
\section{The subobject classifier and its logical applications}
\begin{essay}[1(a)]

\noindent In a given category $\mathcal{C}$, let us declare that a monic $f: a \rightarrowtail d$ is \textbf{contained} in a monic $g: b \rightarrowtail d$, if there is a map $h: a \rightarrowtail b$ such that $gh=f$.
\noindent We declare two monics $f,g$ into $d$ two be \textbf{equivalent}, written $f \simeq g$, if each is contained in the other. A \textbf{subobject} of $d$ is an equivalence class of monics into $d$, and the relation of containment gives us a poset $(Sub(d), \subseteq)$ on the subobjects of $d$.
\\

\noindent Let $\mathcal{C}$ be a category with a terminal object $1$. A \textbf{subobject classifier} of $\mathcal{C}$ is a $\mathcal{C}$-object $\Omega$ together with a $\mathcal{C}$-arrow $1 \xrightarrow{\textit{true}} \Omega$ that satisfies the following axiom:
\\

\textbf{($\Omega$-axiom):} For each monic $f: a \rightarrowtail d$ there is a unique $\mathcal{C}$-arrow $\chi_f: d \rightarrow \Omega$ making
\[ \begin{tikzcd}
a  \arrow{d}{!} \arrow[rightarrowtail]{r}{f}& d \arrow{d}{\chi_f} \\
1 \arrow{r}{true} & \Omega & \text{a pullback square.}
\end{tikzcd} 
\]
\noindent We say $\chi_f$ is the \textbf{character} of the subobject $f$, and often write $\top$ for $true$ and $true_a$ for $a \xrightarrow{!} 1 \xrightarrow{true} \Omega$.
\\

\noindent When they exist, subobject classifiers are unique up to (unique) isomorphism, and it is also easy to see that the assignment of $\chi_f$ to $f$ yields a 1-1 correspondence between subobjects of $d$ and arrow $d \rightarrow \Omega$ --- we have $Sub(d) \cong \mathcal{C}(d, \Omega)$ (at least, so long as we can take arbitrary pullbacks).
\\

\noindent An \textbf{(elementary) topos} $\mathcal{E}$ is a Cartesian closed category with a subobject classifier.
\\
\noindent For instance, $Set$ is a topos with subobject classifier $\top : 1 \xrightarrow{1} 2$. In Set, a monic is just a subset $a \subseteq d$, and we are familiar with the characteristic function $\chi_a : d \rightarrow \{ 0,1 \}$ that sends an element of $d$ to $1$ if it belongs to $a$, and $0$ otherwise. We have $\mathcal{P}(d) = Sub(d) \cong Set(d, 2)$, and furthermore there are operations of (set-theoretic) union $\cup$, intersection $\cap$ and complement $\lnot$ on $Sub(d)$.
\\ These operations turn $Sub(d)$ into a \textbf{Boolean algebra}: a complemented distributive lattice. In particular, we can use $Set(1,\Omega) = \{0,1\}$ and its operations of $\cup$, $\cap$ and $\lnot$ to model classical logic.
\\

\noindent That is the story in $Set$; subobject classifiers are important because they generalise this concept --- once we define on $\mathcal{E}(1, \Omega)$ categorical notions of union, intersection and negation that generalise their counterparts in the case of $Set$, we can model different flavours of logics on different topoi! Some of these logics will not even be classical --- that is, they will differ from the logic of $Set$, and $Sub(d)$ need not be a Boolean algebra. The algebra of subobjects in such a topos will only be a \textit{Heyting algebra}.
\\ Let us see how the whole idea of `modelling logic with topoi' works, starting with classical logic.
\\

\noindent Recall that in defining the formal language \textbf{PL} (propositional logic), we have the set $\Phi_0 := \{\pi_0, \pi_1, \pi_2, \ldots \}$ of variables, and the set $\Phi := \{ \alpha : \alpha \text{ is a \textbf{PL}-sentence} \}$ of sentences. (See the Appendix for a review of the formal language \textbf{PL}, along with the axiom systems \textbf{CL} (classical logic) and \textbf{IL} (intuitionistic logic).)
\\

\noindent Recall also that every Boolean algebra $\mathbb{B}=(B, \sqsubseteq, \sqcap, \sqcup, ', 0, 1)$ has, by definition, operations of meet ($\sqcap$), join ($\sqcup$), and complement ($'$). We may additionally define an implication operation as $x \Rightarrow y := x' \sqcup y$.
\\ We now describe the semantics of \textbf{PL} in $\mathbb{B}$:
\\ A \textbf{$\mathbb{B}$-valuation} is a function $V: \Phi_0 \rightarrow B$. By the following rules, any valuation lifts uniquely to a function $V: \Phi \rightarrow B$:
\begin{itemize}
    \item[(a)] $V(\sim \alpha) = V(\alpha)'$; 
    \item[(b)] $V(\alpha \land \beta) = V(\alpha) \sqcap V(\beta)$; 
    \item[(c)] $V(\alpha \lor \beta) = V(\alpha) \sqcup V(\beta)$; 
    \item[(d)] $V(\alpha \supset \beta) = V(\alpha) \Rightarrow V(\beta)$.
\end{itemize}
We say a sentence $\alpha$ is \textbf{$\mathbb{B}$-valid}, and write $\mathbb{B} \models \alpha$, if for every $\mathcal{B}$-valuation $V$ we have $V(\alpha)=1$.
\\ Note that in any $\mathbb{B}$, $0$ and $1$ provide a copy of the Boolean algebra \textbf{2}, so already we see that $\mathbb{B} \models \alpha$ only if $\alpha$ is a \textbf{tautology}, i.e., \textbf{2} $\models \alpha$.
\\

\noindent We have the following \textbf{Soundness Theorem} for $\mathbb{B}$-validity: \textit{If $\vdash_{CL} \alpha$ then $\mathbb{B} \models \alpha$.}
\\ The converse also holds! That is, we have the following \textbf{Completeness Theorem}: \textit{If $\mathbb{B} \models \alpha$ then $\vdash_{CL} \alpha$.}
\\

\noindent In particular, if a sentence is valid in \textit{some} Boolean algebra, then it is a \textbf{CL}-theorem, so it is valid in \textit{every} Boolean algebra!
\\ This is a powerful result, and indeed, completeness is the harder direction of the two to prove.
\\

\noindent Next, let us fulfill the promise of defining truth-arrows in a topos --- categorical notions of $\cap$, $\cup$ etc. on the subobject classifier. Let $\mathcal{E}$ be a topos with classifier $\top : 1 \rightarrow \Omega$. Make the following definitions:
\begin{enumerate}
    \item $\lnot: \Omega \rightarrow \Omega$ is the character of $\bot : 1 \rightarrow \Omega$, where $\bot$ is the character of $!: 0 \rightarrow 1$.
    \item $\cap: \Omega \times \Omega \rightarrow \Omega$ is the character of the product arrow $\langle \top,\top \rangle : 1 \rightarrow \Omega \times \Omega$.
    \item $\cup: \Omega \times \Omega \rightarrow \Omega$ is the character of the image of the arrow $[\langle \top_\Omega, 1_\Omega \rangle, \langle 1_\Omega, \top_\Omega \rangle ]: \Omega + \Omega \rightarrow \Omega \times \Omega$.
    \\ (The \textbf{image} of a map $f:a \rightarrow b$ in a topos is the smallest subobject of $b$ through which $f$ factors.)
    
    \item $\Rightarrow: \Omega \times \Omega \rightarrow \Omega$ is the character of $e: \circled{$\leq$} \rightarrowtail \Omega \times \Omega$, where $e$ is the equaliser of $\Omega \times \Omega \overset{\cap \ }{\underset{\pi_1  }\rightrightarrows} \Omega $.
\end{enumerate}

\noindent In $Set$, unpacking the definitions gives us the classical truth functions.
\\ For instance, $\Rightarrow: 2 \times 2 \rightarrow 2$ sends $(1,0)$ to $0$ and all other tuples to $1$.
\\

\noindent Now we can describe the semantics of interpreting propositional logic in any topos $\mathcal{E}$!
\\ A \textbf{truth value} in $\mathcal{E}$ is an arrow $1 \rightarrow \Omega$.
\\ An \textbf{$\mathcal{E}$-valuation} is a function $V: \Phi_0 \rightarrow \mathcal{E}(1, \Omega)$. Similarly to valuations on a Boolean algebra, any such function extends to all of $\Phi$ by the following rules:
\begin{itemize}
    \item[(a)] $V(\sim \alpha) = \lnot \circ V(\alpha)$.
    \item[(b)] $V(\alpha \land \beta) = \cap \circ \langle V(\alpha), V(\beta) \rangle$.
    \item[(c)] $V(\alpha \lor \beta) = \cup \circ \langle V(\alpha), V(\beta) \rangle$.
    \item[(d)] $V(\alpha \subset \beta) = \Rightarrow \circ \ \langle V(\alpha), V(\beta) \rangle$.
\end{itemize}
We say $\alpha$ is $\mathcal{E}$\textbf{-valid}, and write $\mathcal{E} \models \alpha$, if for every $\mathcal{E}$-valuation $V$ we have $V(\alpha) = \top : 1 \rightarrow \Omega$.
\\
\noindent Several questions immediately arise.
\\ For one, we might wonder if the notions of $\mathbb{B}$-valuations and $\mathcal{E}$-valuations are related. We shall give a better result at the end of this section, once we generalise the notion of a Boolean algebra $\mathbb{B}$ to that of \textit{Heyting algebra} $\mathbb{H}$, and define $\mathbb{H}$-valuations entirely analogously to $\mathbb{B}$-valuations. It will then be noted that $Sub(d)$ is always a Heyting algebra, and that an $\mathcal{E}$-valuation is precisely a $Sub(1)$-valuation, where $1 \in \mathcal{E}$ is the terminal object.
\\

\noindent Another question one might ask is how `compatible' our topos interpretation is with the system \textbf{CL}: namely, we ask if \textbf{CL} is sound and complete for $\mathcal{E}$-validity. (This was the case for $\mathbb{B}$-validity, i.e., when we interpreted propositional logic in a Boolean algebra.)
\\ It turns out that \textbf{CL} is complete but not sound for $\mathcal{E}$-validity: in any topos $\mathcal{E}$, every $\mathcal{E}$-valid sentence is derivable as a theorem in \textbf{CL}, but there exist topoi $\mathcal{E}$ in which some \textbf{CL}-theorems are not $\mathcal{E}$-valid.
\\ More precisely, the first eleven axioms of \textbf{CL} (see Appendix) are always $\mathcal{E}$-valid, so we are saying that in some topoi the twelth axiom $\alpha \lor \sim \alpha$ is not valid.
\\ The `correct' axiom system which captures $\mathcal{E}$-validity is the system \textbf{IL} (intuitionistic  logic), obtained simply by removing the twelfth axiom of \textbf{CL}, keeping all other axioms, and the single inference rule. In \textbf{IL}, tautologies such as $\alpha \lor \sim \alpha$ and $\sim \sim \alpha \supset \alpha$ are not derivable, so this is genuinely a different system than \textbf{CL}.
\\

\noindent A topos is \textbf{degenerate} if there is an arrow $1 \rightarrow 0$, or equivalently, if all its objects are isomorphic. A topos is \textbf{bivalent} if $\top$ and $\bot$ are its only truth values. A topos is \textbf{classical} if $[\top, \bot]: 1+1 \rightarrow \Omega$ is an isomorphism.
\\ These are just various measures of how much a topos `looks like' $Set$, which is a non-degenerate bivalent classical topos. We remark that for a bivalent topos $\mathcal{E}$, we do have \textbf{CL}-soundness for $\mathcal{E}$-validity.
\\

\noindent As examples, the category $Set^2$ of pairs of sets is a classical, non-bivalent topos. If \textbf{M} is a monoid, then the category \textbf{M-Set} of its actions is a topos, and this topos is classical iff \textbf{M} is a group.
\\ In particular \textbf{M$_2$-Set} is not classical, where \textbf{M$_2$} is the monoid $(\{0,1\}, \cdot)$ where $\cdot$ is usual integer multiplication. It is, however, bivalent, so by the above paragraph \textbf{M$_2$-Set} models all the \textbf{CL}-theorems.
\\

\noindent Let us next discuss how to turn $Sub(d)$ into a lattice, which we alluded to earlier.
\\ Let $\mathcal{E}$ be a topos, and $d \in \mathcal{E}$. Using the operations we have defined on we define some operations on $Sub(d)$:
\begin{enumerate}
    \item The complement of $f: a \rightarrowtail d$ (relative to $d$) is the subobject $-f : -a \rightarrowtail d$ whose character is $\lnot \circ \chi_f$.
    \item The intersection of $f: a \rightarrowtail d$ and $g: b \rightarrowtail d$ is the subobject $f \cap g : a \cap b \rightarrowtail d$ whose character is $\chi_f \cap \chi_g: = \cap \circ \langle \chi_f, \chi_g \rangle$.
    \item The union of $f: a \rightarrowtail d$ and $g: b \rightarrowtail d$ is the subobject $f \cup g : a \cup b \rightarrowtail d$ whose character is $\chi_f \cup \chi_g: = \cup \circ \langle \chi_f, \chi_g \rangle$.
    \item The subobject $f \Rrightarrow g : a \Rrightarrow b \rightarrowtail d$, for subobjects $f: a \rightarrowtail d$ and $g: b \rightarrowtail d$, is that whose character is $\chi_f \Rightarrow \chi_g: = \Rightarrow \circ \ \langle \chi_f, \chi_g \rangle$.
\end{enumerate}

\noindent Then $(Sub(d), \subseteq)$ is a bounded distributive lattice, with $\cap$ and $\cup$ above providing the meet and join operations. ($1_d$ and $0_d$ provide the unit and zero.) If this is complemented, then it is a Boolean algebra by definition. This is \textit{not} always the case: while $f \cap -f \simeq 0_d$ always holds, $f \cup -f \simeq 1_d$ need not. Let us say a topos is \textbf{Boolean} if for every $d \in \mathcal{E}$, $(Sub(d), \subseteq)$ is a Boolean algebra.
The following are equivalent:
\begin{enumerate}
    \item $\mathcal{E}$ is Boolean;
    \item $Sub(\Omega)$ is a Boolean algebra;
    \item $\mathcal{E}$ is classical;
    \item $\bot= - \top$ in $Sub(\Omega)$;
    \item $\lnot \circ \lnot = 1_\Omega$;
    \item in $Sub(\Omega)$, $f \Rrightarrow g \simeq -f \cup g$;
    \item in \textit{each} $Sub(d)$, $f \Rrightarrow g \simeq -f \cup g$.
\end{enumerate}
In particular, in a non-Boolean topos $\Rrightarrow$ behaves differently from a Boolean implication operator.
\\

\noindent The following equivalent conditions are weaker than the above:
\begin{enumerate}
    \item $\mathcal{E} \models \alpha$ iff $\vdash_{CL} \alpha$ for every $\alpha$;
    \item $\mathcal{E} \models \alpha \lor \sim \alpha$ for any $\alpha$;
    \item $Sub(1)$ is a Boolean algebra.
\end{enumerate}
These really are weaker conditions. For instance we have remarked that \textbf{M$_2$-Set} models every \textbf{CL}-theorem but is not classical.
\\

\noindent The slogan is \textit{topoi generalise sets}, so let us go further and define, where $f: a \rightarrowtail d$ is a subobject of $d$ in topos $\mathcal{E}$, $x: 1 \rightarrow d$ to be \textbf{an element of} $f$ if $x$ factors through $f$. Write this as $x \in f$. We always have, in $Sub(d)$, $x \in f \cap g$ iff $x \in f$ and $x \in g$. However, the property $$x \in -f \text{ iff } x \notin f$$ holds in every $Sub(d)$, iff $\mathcal{E}$ is bivalent. As for the property $$x \in f \cup g \text{ and } x \in f \text{ or } x \in g,$$
if this holds in every $Sub(d)$ we say $\mathcal{E}$ is \textbf{disjunctive}.
\\ We have the following characterisation:
$$\text{If $\mathcal{E}$ is Boolean and non-degenerate, then it is disjunctive iff it is bivalent.}$$

\noindent A topos is \textbf{extensional} if in $Sub(d)$ we always have
$$f \subseteq g \text{ iff whenever } x: 1\rightarrow d \text{ and } x \in f \text{, we have } x \in g.$$
That is, extensional topoi are those in which subobjects are determined by their elements. $Set$ is extensional.
\\

\noindent Let us say a bit more about non-Boolean topoi in general. A topos fails to be Boolean precisely when some $(Sub(d), \subseteq)$ fails to be Boolean.
\noindent In a lattice $\mathbb{L}=(L, \sqsubseteq)$, we say $c \in L$ is the \textbf{pseudo-complement} of $a \in L$ \textbf{relative to} $b \in L$, written $c= a \Rightarrow b$, if $c$ is the greatest element of $\{x \in L: a \sqcap x \sqsubset b \}$.
\\ If $a \Rightarrow b$ exists for all $a, b \in L$, we say $\mathbb{L}$ is a \textbf{relatively pseudo-complemented (r.p.c.) lattice}.
\\

\noindent Finally, a \textbf{Heyting algebra} is an r.p.c. lattice with zero.
\\ If $\mathbb{H} = (H, \sqsubseteq, \Rightarrow, 0)$ is a Heyting algebra, we may define the \textbf{pseudo-complement} $\lnot : H \rightarrow H$ as $\lnot a = a \Rightarrow 0$.
\\ We define an \textbf{$\mathbb{H}$-valuation} as a function $V: \Phi_0 \rightarrow H$. Once again, such a function extends to a function on sentences, using $\sqcap, \sqcup, \Rightarrow, \lnot$ to respectively interpret $\land, \lor, \supset, \sim$ in exactly the same way as with $\mathbb{B}$-valuations.
\\A sentence $\alpha$ is $\mathbb{H}$-valid if for all $\mathbb{H}$-valuations $V$, $V(\alpha) = 1$. $\alpha$ is \textbf{HA}-valid if it is valid in every Heyting algebra.
\\We have Soundness and Completeness!
$$\text{$\alpha$ is \textbf{HA}-valid iff $\vdash_{IL} \alpha$.}$$

\noindent The point is, although $(Sub(d), \subseteq)$ need not be a Boolean algebra, it is always a Heyting algebra. It can be verified that the r.p.c. is given by $\Rrightarrow$.
\\ Since the $\Omega$-axiom gave us $Sub(d) \cong \mathcal{E}(d, \Omega)$ (as sets), we may also consider the latter as a Heyting algebra.
\\ To our relief, the following holds:
$$\mathcal{E} \models \alpha \text{ iff } \mathcal{E} (1, \Omega) \models \alpha \text{ iff } Sub(1) \models \alpha .$$
This is because the unit of the Heyting algebra $\mathcal{E}(1, \Omega)$ is $\top : 1 \rightarrow \Omega$.
\\

\noindent Soundness of \textbf{IL} for $\mathcal{E}$-validity now follows immediately for its soundness for \textbf{HA}-validity:
\\ If $\vdash_{IL} \alpha$ then $\alpha$ is \textbf{HA}-valid, so $\mathcal{E}(1, \Omega) \models \alpha$, so $\mathcal{E} \models \alpha$.
\\ In fact \textbf{IL}-Completeness for $\mathcal{E}$-validity also holds: $\text{If $\alpha$ is valid on every topos, then } \vdash_{IL} \alpha.$
\\ The latter is proven using some additional theory on \textit{Kripke-style semantics}, in Goldblatt (2006).
\\

\noindent As a final remark, we note that higher-order logics can also be interpreted in topoi -- these are logics with quantifiers $\forall$, $\exists$. All this again exploits the Heyting algebra structure on $Sub(d)$, which we recall hinges on the existence of the wonderful subobject classifier.
\end{essay}

\section{The subobject classifier on a presheaf topos}

\begin{eg}[1(b)(i)]
The subobject classifier $\Omega$ of the presheaf topos given by $Set^{P^{op}}$, \\ where $P$ is the powerset of $\{1,2,3\}$ seen as a poset under inclusion, is described as follows.
\end{eg}

\noindent For a given object $a$ in $P$, let $S_a$ be the collection of all elements in $P$ contained in $a$,
$$S_a = \{b: b \subseteq a \}.$$
A $\textbf{crible}$ on $a$, or $a$-\textbf{crible}, is a downwards-closed subset $S$ of $S_a$, meaning whenever $b \in S$ and $c \subseteq b$, then $c \in S$.
\\ Then the subobject classifier is the functor $\Omega: P^{op} \rightarrow Set$ defined on objects by by
$$\Omega(a) = \{S: S \text{ is an } a \text{-crible}\}$$
and on maps by
$$\Omega(b \subseteq a) = (\Omega(a) \rightarrow \Omega(b), S \mapsto \{c \in S : c \subseteq b\}).$$
$$$$

\noindent (More generally, the subobject category on a functor category $[\mathfrak{C},Set]$ is described similarly using the dual notion of cribles, called $sieves$. This is explained in Goldblatt (2006).)
\\ In our example, there are twenty generalised truth values (arrows from the terminal object to $\Omega$).
\\

\noindent To see this, note that the terminal object in $Set^{P^{op}}$ is the functor that sends every element of $P^{op}$ to the terminal object of $Set$, which is just a singleton $1$. The (co)representable functor $H_{\{1,2,3\}} = P(-, \{1,2,3\})$ does this, since for any $a \subseteq \{1,2,3\}$ the set $P(a, \{1,2,3\})$ has precisely one element, given by $a \subseteq \{1,2,3\}$.
\\

\noindent Next, observe that
\begin{align*} 
Set^{P^{op}}( H_{\{1,2,3\}} , \ \Omega)
\cong  & \  \Omega(\{1,2,3\}) && \text{by the Yoneda Lemma.}\\
\end{align*}

\noindent This means that the generalized truth values are just elements of $\Omega(\{1,2,3\})$, i.e., $\{1,2,3\}$-cribles. There are twenty of these:

\begin{align*}
    & \ \emptyset , \\
    & \{\emptyset \}, \\
    & \{\emptyset, \{1\} \}, \qquad \{\emptyset, \{2\} \}, \qquad \{\emptyset, \{3\} \}, \\
    & \{\emptyset, \{1\}, \{2\} \}, \qquad \{\emptyset, \{2\}, \{3\} \}, \qquad \{\emptyset, \{1\}, \{3\} \}, \\
    & \{\emptyset, \{1\}, \{2\}, \{3\} \},\\
    & \{\emptyset, \{1\}, \{2\}, \{1,2\} \}, \qquad \{\emptyset, \{1\}, \{2\}, \{3\}, \{1,2\} \} \\
    & \{\emptyset, \{2\}, \{3\}, \{2,3\} \}, \qquad \{\emptyset, \{1\}, \{2\}, \{3\}, \{2,3\} \} \\
    & \{\emptyset, \{1\}, \{3\}, \{1,3\} \}, \qquad \{\emptyset, \{1\}, \{2\}, \{3\}, \{1,3\} \} \\
    & \{\emptyset, \{1\}, \{2\}, \{3\}, \{1,2\}, \{1,3\} \},\qquad \{\emptyset, \{1\}, \{2\}, \{3\}, \{1,2\}, \{2,3\} \},\qquad \{\emptyset, \{1\}, \{2\}, \{3\}, \{2,3\}, \{1,3\} \}, \\
    & \{\emptyset, \{1\}, \{2\}, \{3\}, \{1,2\} \{2,3\}, \{1,3\} \}, \\
    & \{\emptyset, \{1\}, \{2\}, \{3\}, \{1,2\} \{2,3\}, \{1,3\}, \{1,2,3\} \}. \\
\end{align*}

\noindent We have systemically listed these twenty $\{1,2,3\}$-cribles in ascending order of the size of a largest set contained in the crible: first we listed the empty crible, then we listed the singleton crible, then the cribles whose largest size of an element is one, then those whose largest size of an element is two, then the crible equal to the entire powerset $\mathbb{P}(\{1,2,3\})$ of $\{1,2,3\}$.
\\

\noindent Henceforth we will often write $\Omega$ to mean $\Omega(\{1,2,3\}) \cong \text{Sub}(H_{\{1,2,3\}})$.

\begin{prop}[1(b)(ii)]
Regarding the truth values of the subobject classifier as a Heyting algebra, $\Omega$ has a monoid stucture with multiplication given by the lattice meet operation $\land$.
\end{prop}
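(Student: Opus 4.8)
The plan is to recognise that this claim is an instance of the general fact that \emph{any bounded meet-semilattice is a commutative monoid under its meet, with the top element as unit}. Consequently almost all of the work lies in pinning down, concretely, what the meet $\wedge$ and the top element of $\Omega$ are; the monoid axioms are then one line each.

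First I would make explicit the lattice structure on $\Omega = \Omega(\{1,2,3\})$ coming from the identification $\Omega \cong \mathrm{Sub}(H_{\{1,2,3\}})$ and the discussion in Section~1. The order is inclusion of cribles, and the meet $S \wedge T$ of two $\{1,2,3\}$-cribles is simply the intersection $S \cap T$: the intersection of two downward-closed subsets of $S_{\{1,2,3\}}$ is again downward-closed, and it is visibly the largest crible contained in both $S$ and $T$, hence their greatest lower bound. (This is the concrete shadow of the fact, recalled in Section~1, that meets in $\mathrm{Sub}(d)$ are computed via the pullback/intersection construction with character $\cap \circ \langle \chi_f, \chi_g\rangle$.) Likewise I would identify the top element $\mathbf{1} \in \Omega$ as the maximal crible $S_{\{1,2,3\}} = \mathbb{P}(\{1,2,3\})$ — all subsets of $\{1,2,3\}$ — which is precisely the crible corresponding under Yoneda to $\top : 1 \to \Omega$, i.e.\ the last entry in the list of twenty truth values above.

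Then I would verify the monoid axioms for $(\Omega, \wedge, \mathbf{1})$. Associativity $(S \wedge T) \wedge U = S \wedge (T \wedge U)$ is immediate from associativity of set-theoretic intersection (equivalently, it holds in every lattice). For the unit law, any crible $S$ satisfies $S \subseteq \mathbb{P}(\{1,2,3\}) = \mathbf{1}$, so $S \wedge \mathbf{1} = S \cap \mathbf{1} = S = \mathbf{1} \cap S = \mathbf{1} \wedge S$. Hence $\wedge$ endows $\Omega$ with the structure of a monoid, which is moreover commutative since the lattice meet is commutative — though commutativity is not required for the statement.

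I do not expect a genuine obstacle. The only point needing a little care is the first step: being sure that the meet of the Heyting algebra $\Omega$ really is intersection of cribles and that its unit really is the full crible $S_{\{1,2,3\}}$, rather than, say, the join or some other element of the twenty. Once that identification is secured, the monoid laws are trivial. A maximally economical write-up would simply invoke the principle that every bounded lattice $(L,\wedge,\vee,0,1)$ gives a commutative monoid $(L,\wedge,1)$ and note that $\Omega$ is such a lattice by the description of the subobject classifier above together with the lattice structure on $\mathrm{Sub}(d)$ from Section~1.
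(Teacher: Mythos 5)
Your proposal is correct and proceeds essentially as the paper does: identify the meet on $\Omega$ as intersection of cribles (checking that the intersection of downward-closed sets is downward-closed), take the full crible $\mathbb{P}(\{1,2,3\})$ as the unit, and read off associativity and the unit law from set-theoretic intersection. The only cosmetic difference is that you frame this as an instance of the general fact that a bounded meet-semilattice is a commutative monoid, whereas the paper verifies the axioms directly; the underlying verification is the same.
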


\begin{proof}
The meet operation on $\Omega$ is given by set-theoretic intersection. We need to show that this is an associative binary operation on the set of $\{1,2,3\}$-cribles, and that the powerset $\mathbb{P}(\{1,2,3\})$ is the unit of this operation.
\\

\noindent To show that $\land$ is a binary operation, we must check that if $S_1$ and $S_2$ are $\{1,2,3\}$-cribles, then so is $S_1 \land S_2$:
\begin{itemize}
    \item $ S_1 \land S_2 \subseteq S_{\{1,2,3\}}$, because $ S_1 \land S_2 \subseteq S_1 \subseteq S_{\{1,2,3\}}$.
    \item Suppose $c \in S_1 \land S_2$, and $b \subseteq c$.
    \\ Then $b \in S_1$ since $S_1$ is downwards-closed; similarly $b \in S_2$ since $S_2$ is downwards-closed. Therefore $b \in S_1 \cap S_2 = S_1 \land S_2$.
\\ This shows that $S_1 \land S_2$ is really downwards-closed, i.e., a crible.
\end{itemize}

\noindent This binary operation is associative because taking set-theoretic intersections is associative:
\\ for any sets $x,y,z$, we have $(x \cap y) \cap z = x \cap (y \cap z)$.
\\ 

\noindent Finally, the operation has unit $\mathbb{P}(\{1,2,3\})$ because its intersection with any $\{1,2,3\}$-crible $S$ is just $S$.
\\ (After all, we have $S \subseteq S_{\{1,2,3\}} = \mathbb{P}(\{1,2,3\})$.)
\end{proof}
\section{Actions of the subobject classifier}
As a monoid, $\Omega$ can act on a set $X$. This action is well-defined if
$$\langle p=q \rangle \cdot p = \langle p=q \rangle \cdot q$$
for all $p, q \in X$,
where $\langle p=q \rangle $ is the truth value of the assertion $p=q$.
\\

\noindent All of the below concerns well-defined actions as above. We define a partial order on $\Omega$ by $\alpha \leq \beta$ iff $\alpha \land \beta = \alpha$. We may assume that
\begin{equation}\label{alphaassumption}
   \alpha \leq \langle \alpha \cdot p=p\rangle. 
\end{equation}

\noindent We also note that the truth assignment satisfies
$$\langle p=q \rangle \leq \langle q=p \rangle $$
and
$$\langle p=q \rangle \land \langle q=r \rangle \leq \langle p=r \rangle;$$
Goldblatt (2006) provides these as axioms under the section \textit{Heyting-valued sets}.

\begin{lem}[1(c)(i)]
For all $p,q$ the following three statements are equivalent:
\begin{enumerate}
    \item $p = \langle p=q \rangle \cdot p$;
    \item $p = \langle p=q \rangle \cdot q$;
    \item $p = \alpha \cdot q$ for some $\alpha$.
\end{enumerate}
\end{lem}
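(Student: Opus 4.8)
The plan is to establish the cycle $(1)\Leftrightarrow(2)$ together with $(2)\Rightarrow(3)\Rightarrow(1)$, which yields the three-way equivalence. Two of these steps carry no content. For $(1)\Leftrightarrow(2)$: by well-definedness of the action we have $\langle p=q\rangle\cdot p = \langle p=q\rangle\cdot q$, so the equation $p=\langle p=q\rangle\cdot p$ holds if and only if $p=\langle p=q\rangle\cdot q$ holds. For $(2)\Rightarrow(3)$: simply take $\alpha=\langle p=q\rangle$.

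The only implication with substance is $(3)\Rightarrow(1)$. Here I would begin from the hypothesis $p=\alpha\cdot q$ and first show that $\alpha\leq\langle p=q\rangle$. This is precisely where assumption~\eqref{alphaassumption} is used: instantiating it with $q$ in place of $p$ gives $\alpha\leq\langle\alpha\cdot q=q\rangle$, and since $\alpha\cdot q=p$ the right-hand side is exactly $\langle p=q\rangle$. Unwinding the definition $\alpha\leq\beta\iff\alpha\land\beta=\alpha$, this says $\alpha\land\langle p=q\rangle=\alpha$.

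With that comparison in hand, the conclusion follows from a short computation using the monoid-action law $(\alpha\beta)\cdot x=\alpha\cdot(\beta\cdot x)$ together with the fact that the monoid multiplication on $\Omega$ is the meet $\land$ (Proposition 1(b)(ii)):
\[
\langle p=q\rangle\cdot p \;=\; \langle p=q\rangle\cdot(\alpha\cdot q)\;=\;\big(\langle p=q\rangle\land\alpha\big)\cdot q\;=\;\alpha\cdot q\;=\;p,
\]
which is precisely statement (1). (Equivalently one could target statement (2) directly via $\langle p=q\rangle\cdot q=\langle p=q\rangle\cdot p$ and the same chain, then invoke $(1)\Leftrightarrow(2)$.)

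I expect the main — indeed the only — obstacle to be noticing that \eqref{alphaassumption} should be applied with the roles of $p$ and $q$ interchanged, in order to turn the hypothesis $p=\alpha\cdot q$ into the order relation $\alpha\leq\langle p=q\rangle$; once that is seen, the rest is routine bookkeeping with associativity of $\land$ and the action axioms. Note that the symmetry and transitivity inequalities for $\langle\,\cdot=\cdot\,\rangle$ quoted just before the lemma do not seem to be needed for this particular statement.
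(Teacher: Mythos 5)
Your proposal is correct and follows essentially the same route as the paper: the easy steps $(1)\Rightarrow(2)$ (well-definedness) and $(2)\Rightarrow(3)$ (take $\alpha=\langle p=q\rangle$), and for $(3)\Rightarrow(1)$ the same key move of applying assumption~\eqref{alphaassumption} to get $\alpha\leq\langle\alpha\cdot q=q\rangle=\langle p=q\rangle$, hence $\alpha\land\langle p=q\rangle=\alpha$, followed by the same chain using commutativity of $\land$ and the compatibility law for the action (the paper writes this chain starting from $p=\alpha\cdot q$ rather than from $\langle p=q\rangle\cdot p$, which is an immaterial difference). Your closing observation that the symmetry and transitivity inequalities are not needed here also matches the paper's proof.
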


\begin{proof}
We show that the first statement implies the second, the second implies the third, and the third implies the first.
\begin{itemize}
    \item The first statement implies the second, by assumption of the action being well-defined:
    $$p = \langle p=q \rangle \cdot p =\langle p=q \rangle \cdot q.$$
    \item The second statement implies the third; just take $\alpha = \langle p=q \rangle$.
    \item The third statement implies the first.
    \\ Write $p = \alpha \cdot q$ for some $\alpha$. Since $\alpha \leq \langle \alpha \cdot q=q\rangle = \langle p=q\rangle$, we have $\alpha \land \langle p=q \rangle = \alpha$.
    \\ Therefore,
    
        \begin{align*} 
        p =& \alpha \cdot q \\
        =& (\alpha \land \langle p=q \rangle )\cdot q\\
        =& ( \langle p=q \rangle \land \alpha)\cdot q  && \text{the lattice meet operation is commutative;}\\
        =& \langle p=q \rangle \cdot ( \alpha \cdot q)  && \text{compatibility axiom for monoid actions;}\\
        =& \langle p=q \rangle \cdot p. \\
        \end{align*}
\end{itemize}
\end{proof}

\noindent Write $p \leq q$ if the above equivalent conditions hold.

\begin{lem}[1(c)(ii)]
The relation $\leq$ just defined is a partial order on $X$.
\end{lem}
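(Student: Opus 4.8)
The plan is to check the three defining properties of a partial order in turn, using whichever of the three equivalent conditions in Lemma 1(c)(i) is most convenient in each case, together with the two structural facts about the action that were already used in the proof of that lemma: the unit $\mathbb{P}(\{1,2,3\})$ (call it $1$) acts trivially, $1 \cdot p = p$, and the compatibility axiom $(\alpha \land \beta)\cdot p = \alpha\cdot(\beta\cdot p)$ holds.

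Reflexivity and transitivity should fall out of condition (3). For reflexivity, $p \leq p$ holds because $p = 1 \cdot p$, so we may take the witness $\alpha = 1$. For transitivity, suppose $p \leq q$ and $q \leq r$; by condition (3) write $p = \alpha \cdot q$ and $q = \beta \cdot r$. Then $p = \alpha\cdot(\beta\cdot r) = (\alpha\land\beta)\cdot r$, which is exactly condition (3) for $p \leq r$, with witness $\alpha\land\beta$.

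Antisymmetry is the only step that uses anything beyond the bare monoid action, and is where I expect the single point of thought. Suppose $p \leq q$ and $q \leq p$. By condition (2), $p = \langle p=q\rangle\cdot q$, and applying condition (2) to the pair $(q,p)$ as well, $q = \langle q=p\rangle\cdot p$. The symmetry axiom $\langle p=q\rangle \leq \langle q=p\rangle$, applied in both directions, forces $\langle p=q\rangle = \langle q=p\rangle$; denote this common truth value $\alpha$. Then
\[ p = \alpha\cdot q = \alpha\cdot(\alpha\cdot p) = (\alpha\land\alpha)\cdot p = \alpha\cdot p = q, \]
using compatibility and the idempotency of the lattice meet. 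Hence $p = q$, so $\leq$ is antisymmetric, and therefore a partial order.

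The whole argument is short bookkeeping with the monoid action; the only thing one has to notice is that antisymmetry genuinely invokes the symmetry axiom on truth values, not any feature peculiar to this particular $\Omega$. (One could instead avoid identifying $\langle p=q\rangle$ with $\langle q=p\rangle$, by deriving $q = (\langle p=q\rangle\land\langle q=p\rangle)\cdot q$ from the two hypotheses and substituting into $p = \langle p=q\rangle\cdot q$, but using symmetry is cleaner.)
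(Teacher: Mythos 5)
Your proof is correct and follows essentially the same route as the paper: reflexivity and transitivity via the third characterisation (unit and compatibility axioms), and antisymmetry via the characterisations from Lemma 1(c)(i) together with the symmetry axiom on truth values. The only cosmetic difference is that the paper gets antisymmetry slightly more directly from characterisation (1) for $p\leq q$ and (2) for $q\leq p$, writing $p=\langle p=q\rangle\cdot p=q$, whereas you use (2) twice and idempotency of the meet; both arguments are fine.
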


\begin{proof} We check reflexivity, transitivity and antisymmetry.
\\ Let $p,q,r \in X$.
\begin{itemize}
    \item $p \leq p$.
    \\ Use the third equivalent statement above, and the unit axiom for monoid actions. (This says $p= 1 \cdot p$, where $1$ is the unit of the monoid.)
    \item if $p \leq q$ and $\ q \leq r$, then $p \leq r$.
    \\ Use again the third characterisation of $p \leq q$.
    \\ Writing $p = \alpha \cdot q$ and $q = \beta \cdot r$, we see that $p = \alpha \cdot (\beta \cdot r) = (\alpha \land \beta) \cdot r$.
    \item if $p \leq q$ and $\ q \leq p$, then $p = q$.
    \\ Using the first characterisation of $p \leq q$, write $p = \langle p=q \rangle \cdot p$.
    \\ Using the second characterisation of $q \leq p$, write $q = \langle q=p \rangle \cdot p= \langle p=q \rangle \cdot p$.
    \\ Then we see that $p = \langle p=q \rangle \cdot p = q$.
\end{itemize}
\end{proof}

\begin{prop}[1(c)(iii)]
The action of $\Omega$ on $X$ seen as a map $\Omega \times X \rightarrow X$ is order-preserving in each variable respectively.
\end{prop}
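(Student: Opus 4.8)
The plan is to prove order-preservation separately in each of the two variables, using the characterisations of $\leq$ on $X$ from Lemma~1(c)(i) together with the monoid-action compatibility axiom and the assumption~\eqref{alphaassumption}.

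First I would treat the second variable: fix $\alpha \in \Omega$ and suppose $p \leq q$ in $X$; I want $\alpha \cdot p \leq \alpha \cdot q$. By the third characterisation of $p \leq q$, write $p = \beta \cdot q$ for some $\beta \in \Omega$. Then $\alpha \cdot p = \alpha \cdot (\beta \cdot q) = (\alpha \land \beta) \cdot q$ by the compatibility axiom for monoid actions, so $\alpha \cdot p = (\alpha \land \beta) \cdot q$. Now I need to exhibit $\alpha \cdot p$ as something-times-$(\alpha \cdot q)$; note $(\alpha \land \beta)\cdot q = (\alpha \land \beta) \land \alpha$ applied suitably — more carefully, $(\alpha \land \beta \land \alpha)\cdot q = (\alpha \land \beta)\cdot q$ since $\alpha \land \beta \leq \alpha$, and $(\alpha \land \beta) \cdot q = \beta \cdot (\alpha \cdot q)$ would require $\alpha \cdot q$ first; instead I use compatibility in the other grouping: $(\alpha \land \beta)\cdot q = \beta \cdot (\alpha \cdot q)$ holds because $\beta \land \alpha = \alpha \land \beta$ and associativity/commutativity of $\land$ together with the action axiom $(\gamma \land \delta)\cdot x = \gamma \cdot(\delta \cdot x)$. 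Hence $\alpha \cdot p = \beta \cdot (\alpha \cdot q)$, which is exactly the third characterisation of $\alpha \cdot p \leq \alpha \cdot q$.

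Next I would treat the first variable: fix $p \in X$ and suppose $\alpha \leq \beta$ in $\Omega$, i.e. $\alpha \land \beta = \alpha$; I want $\alpha \cdot p \leq \beta \cdot p$. Compute $\alpha \cdot p = (\alpha \land \beta)\cdot p = \alpha \cdot (\beta \cdot p)$ by the action axiom, so $\alpha \cdot p = \alpha \cdot (\beta \cdot p)$, which is again the third characterisation (with witness $\alpha$) of $\alpha \cdot p \leq \beta \cdot p$. This direction is essentially immediate once the ordering on $\Omega$ is unpacked via $\land$.

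The main obstacle is purely bookkeeping: making sure the grouping of the ternary expression $(\alpha \land \beta)\cdot q$ is re-associated correctly so that the result lands in the exact form "$=\gamma\cdot(\text{something already known to be }\leq)$" demanded by condition~(3) of Lemma~1(c)(i), and checking that the element $\beta$ used as the witness really is an element of $\Omega$ (it is, by construction). No step requires the symmetry or triangle axioms for $\langle\,\cdot=\cdot\,\rangle$ or assumption~\eqref{alphaassumption} directly — those were already absorbed into the proof of Lemma~1(c)(i) — so the argument here is a short consequence of associativity and commutativity of $\land$ and the two monoid-action axioms.
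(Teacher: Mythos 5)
Your proof is correct and takes essentially the same route as the paper: both halves reduce to the third characterisation of $\leq$ from Lemma 1(c)(i) via the compatibility axiom $(\gamma \land \delta)\cdot x = \gamma\cdot(\delta\cdot x)$, and your first-variable argument is word-for-word the paper's. The only (harmless) difference is in the second variable, where the paper writes $\alpha\cdot p = (\alpha\land\beta)\cdot q$ and then appeals to the already-proved first-variable monotonicity together with $\alpha\land\beta\leq\alpha$, whereas you commute $\land$ to obtain the direct witness $\alpha\cdot p = \beta\cdot(\alpha\cdot q)$, which makes the two halves independent of each other.
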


\begin{proof}
\begin{itemize}
We simply check this in each variable.
    \item Suppose we have $\alpha, \beta \in \Omega$ with $\alpha \leq \beta$. We show that $\alpha \cdot p \leq \beta \cdot p$ for each $p \in X$.
    \\ Well, $\alpha = \alpha \land \beta$, so
    $$\alpha \cdot p = (\alpha \land \beta) \cdot p = \alpha \cdot (\beta \cdot p), $$
    and we are done by the third characterisation of $\alpha \cdot p \leq \beta \cdot p$.
    \item Suppose we have $p,q \in X$ with $p \leq q$. We show that $\alpha \cdot p \leq \alpha \cdot q$ for each $\alpha \in \Omega$.
    \\ Well, using the third characterisation of $p \leq q$, there is some $\beta \in \Omega$ such that $p = \beta \cdot q$. Then,
    $$\alpha \cdot p = \alpha \cdot (\beta \cdot q)= (\alpha \land \beta) \cdot q \leq \alpha \cdot q,$$
    where the last step follows from the fact that the action is order-preserving in the first variable, and the fact that $\alpha \land \beta \leq \alpha$.
\end{itemize}
\end{proof}

\begin{prop}[1(c)(iv)]
The partial order on $X$ has a greatest lower bound operation given by $$p \land q := \langle p=q \rangle \cdot p = \langle p=q \rangle \cdot q.$$
\end{prop}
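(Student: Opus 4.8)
The plan is to verify the two defining properties of a greatest lower bound for the order $\leq$ on $X$ just introduced: that $p \land q$ is a common lower bound of $p$ and $q$, and that it dominates every common lower bound. Before either, I would record that $p \land q$ is well-defined as an element of $X$ — the two expressions $\langle p=q\rangle\cdot p$ and $\langle p=q\rangle\cdot q$ coincide, since this is exactly the hypothesis that the action is well-defined (equivalently, Lemma 1(c)(i) applied with $\alpha = \langle p=q\rangle$). Throughout, all equalities must be justified by the three equivalent characterisations of $\leq$ from Lemma 1(c)(i) and by compatibility of the action with the monoid product, which by Proposition 1(b)(ii) is the meet $\land$; the ambient order on $\Omega$ is not directly available for the new order on $X$.

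The lower-bound property is quick: writing $p \land q = \langle p=q\rangle\cdot p$ exhibits $p \land q$ in the form $\alpha\cdot p$, so the third characterisation of $\leq$ gives $p \land q \leq p$ at once; symmetrically, $p \land q = \langle p=q\rangle\cdot q$ gives $p \land q \leq q$.

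The substantive step — where I expect the real work to lie — is showing $p\land q$ is greatest. Suppose $r \leq p$ and $r \leq q$; I want a single $\mu\in\Omega$ with $r = \mu\cdot(p\land q)$, which by the third characterisation yields $r \leq p \land q$. The key choice is $\mu := \langle r=p\rangle\land\langle r=q\rangle$. Three short facts then suffice: (i) $r = \mu\cdot r$, obtained by invoking the first characterisation of $r\leq p$ and of $r\leq q$ in turn and collapsing $\langle r=p\rangle\cdot(\langle r=q\rangle\cdot r)$ to $(\langle r=p\rangle\land\langle r=q\rangle)\cdot r$ via compatibility of the action with $\land$; (ii) $\mu \leq \langle p=q\rangle$, which follows from the symmetry axiom $\langle r=p\rangle\leq\langle p=r\rangle$, monotonicity of $\land$, and the triangle axiom $\langle p=r\rangle\land\langle r=q\rangle\leq\langle p=q\rangle$; and (iii) $\mu \leq \langle r=p\rangle$, which is immediate. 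Combining these: from (i) and the second characterisation of $r\leq p$,
\[
r = \mu\cdot r = \mu\cdot(\langle r=p\rangle\cdot p) = (\mu\land\langle r=p\rangle)\cdot p = \mu\cdot p
\]
using (iii); and then from (ii),
\[
r = \mu\cdot p = (\mu\land\langle p=q\rangle)\cdot p = \mu\cdot(\langle p=q\rangle\cdot p) = \mu\cdot(p\land q),
\]
so $r \leq p \land q$ as required.

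The only genuinely non-routine decision is picking the witness $\mu = \langle r=p\rangle\land\langle r=q\rangle$; once it is in hand, everything reduces to the compatibility of the action with $\land$ and the two Heyting-valued-set axioms on $\langle\,\cdot=\cdot\,\rangle$ quoted before the lemmas. The main obstacle to watch is keeping all manipulations inside the toolkit of Lemma 1(c)(i), rather than silently using properties of $\leq$ on $\Omega$.
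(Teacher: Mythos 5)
Your proposal is correct and follows essentially the same route as the paper: well-definedness from the action being well-defined, the two lower bounds via the third characterisation of $\leq$, and the greatest-lower-bound step by writing $r = (\langle r=p\rangle\land\langle r=q\rangle)\cdot p$ and pushing this below $\langle p=q\rangle\cdot p$ using the symmetry and triangle axioms. The only (cosmetic) difference is that the paper cites Proposition 1(c)(iii) (order-preservation of the action in the first variable) for the last step, whereas you inline that argument by producing the explicit witness $\mu$ and applying the third characterisation directly.
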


\begin{proof}
This operation is at least well-defined, since we are working with well-defined actions.
\\ We need to show that $p \land q \leq p$ and $p \land q \leq q$, and that $p \land q$ is universal with respect to these properties, meaning that whenever $r \leq p$ and $r \leq q$, we have $r \leq p \land q$.
\begin{itemize}
    \item $p \land q \leq p$.
    \\ $p \land q = \langle p=q \rangle \cdot p$, so w are done by the third characterisation of $p \land q \leq p$. (Just take $\alpha = \langle p \land q \rangle $.)
    \item $p \land q \leq q$.
    \\ Similarly, $p \land q = \langle p=q \rangle \cdot q$, so w are done by the third characterisation of $p \land q \leq q$. (Just take $\alpha = \langle p \land q \rangle $.)
    \item whenever $r \leq p$ and $r \leq q$, we have $r \leq p \land q$.
    \\ We have
\begin{align*} 
r= & \langle r=q \rangle \cdot r &&\text{by the first characterisation of $r \leq q$;} \\
= & \langle r=q \rangle \cdot (\langle r=p\rangle \cdot p) &&\text{by the second characterisation of $r \leq p$;} \\
= & (\langle r=q \rangle \land \langle r=p \rangle) \cdot p &&\text{compatibility axiom for monoid actions;} \\
= & (\langle r=p \rangle \land \langle r=q \rangle) \cdot p &&\text{the lattice meet operation is commutative;} \\
= & (\langle p=r \rangle \land \langle r=q \rangle) \cdot p \\
\leq & (\langle p=q \rangle) \cdot p &&\text{the action is order-preserving in the first variable.}\\
\end{align*}
\end{itemize}
\end{proof}

\begin{prop}[1(c)(v)]
For all $p \in X$ we have an adjunction given by the pair of functors $F_p : \Omega \rightarrow X$ and $G_p : X \rightarrow \Omega$, where
$$F_p(\alpha) = \alpha \cdot p \qquad \text{and} \qquad G_p(q) = \langle p \leq q \rangle.$$
\end{prop}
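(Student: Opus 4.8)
The plan is to exploit that both $\Omega$ and $X$ are posets: $\Omega$ is a Heyting algebra, and $X$ carries the partial order of Lemma 1(c)(ii). An adjunction between two posets regarded as categories is precisely a Galois connection, so proving $F_p \dashv G_p$ reduces to establishing the single biconditional
\[ \alpha \cdot p \leq q \quad\Longleftrightarrow\quad \alpha \leq \langle p \leq q\rangle \qquad\text{for all } \alpha\in\Omega,\ q\in X, \]
together with monotonicity of the two maps. Monotonicity of $F_p$ is exactly Proposition 1(c)(iii) (order-preservation in the first variable), and monotonicity of $G_p$ is automatic once the biconditional is known (take $\alpha = G_p(q)$), so the whole task is the displayed equivalence — equivalently, the counit inequality $\langle p\leq q\rangle\cdot p \leq q$ and the unit inequality $\alpha \leq \langle p\leq q\rangle$ whenever $\alpha\cdot p\leq q$. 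I would use throughout that $\langle p\leq q\rangle = \langle p\wedge q = p\rangle$ (the truth value of ``$p\leq q$'', since in a poset $p\leq q$ iff $p = p\wedge q$), with $p\wedge q = \langle p=q\rangle\cdot p$ from Proposition 1(c)(iv); that $\alpha\leq 1$ and $\alpha\cdot\alpha = \alpha\wedge\alpha = \alpha$ for every $\alpha\in\Omega$ (Proposition 1(b)(ii) and the action axioms); and the symmetry and transitivity axioms for $\langle\,\cdot=\cdot\,\rangle$ recalled before Lemma 1(c)(i).

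For the easy (``$\Leftarrow$'') half I would first note $\alpha\cdot p\leq p$ — indeed $\alpha\leq 1$ gives $\alpha\cdot p \leq 1\cdot p = p$ by Proposition 1(c)(iii) — and then apply well-definedness of the action to the pair $(p\wedge q,\ p)$, whose equality truth value is $\langle p\leq q\rangle$, to get $\langle p\leq q\rangle\cdot p = \langle p\leq q\rangle\cdot(p\wedge q) \leq p\wedge q \leq q$, the middle inequality because $\langle p\leq q\rangle\leq 1$. The counit inequality then propagates to any $\alpha\leq\langle p\leq q\rangle$ by one more use of Proposition 1(c)(iii), giving $\alpha\cdot p\leq\langle p\leq q\rangle\cdot p\leq q$.

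The hard direction — the step I expect to be the main obstacle — is ``$\Rightarrow$'': from $\alpha\cdot p\leq q$ we must extract $\alpha\leq\langle p\wedge q = p\rangle$. My intended route: since $\alpha\cdot p\leq p$ as well, Proposition 1(c)(iv) gives $\alpha\cdot p\leq p\wedge q$; now push this inequality through the action by $\alpha$ using order-preservation in the second variable, and observe that $\alpha\cdot(\alpha\cdot p) = (\alpha\cdot\alpha)\cdot p = \alpha\cdot p$ by idempotency of $\alpha$ under the monoid multiplication, so $\alpha\cdot p \leq \alpha\cdot(p\wedge q)$; since the reverse inequality holds as $p\wedge q\leq p$, we get $\alpha\cdot p = \alpha\cdot(p\wedge q)$. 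Then assumption \eqref{alphaassumption}, applied at the element $p\wedge q$ rather than at $p$, gives $\alpha \leq \langle\alpha\cdot(p\wedge q) = p\wedge q\rangle = \langle\alpha\cdot p = p\wedge q\rangle$, while \eqref{alphaassumption} at $p$ together with symmetry gives $\alpha\leq\langle p = \alpha\cdot p\rangle$; transitivity of $\langle\,\cdot=\cdot\,\rangle$ then yields
\[ \alpha = \alpha\wedge\alpha \leq \langle p = \alpha\cdot p\rangle \wedge \langle\alpha\cdot p = p\wedge q\rangle \leq \langle p = p\wedge q\rangle = \langle p\leq q\rangle. \]

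The one genuinely non-routine move is the bootstrap $\alpha\cdot p = \alpha\cdot(\alpha\cdot p)\leq\alpha\cdot(p\wedge q)$, which trades on the fact that $\Omega$'s monoid operation is the idempotent meet; once that is in place everything else is bookkeeping with the lemmas already established, and I do not anticipate further difficulties. (An alternative would be to obtain $G_p$ abstractly as a right adjoint to $F_p=(-)\cdot p$ via a completeness/adjoint-functor argument, but the direct Galois-connection check above is more economical and needs no completeness hypothesis on $X$.)
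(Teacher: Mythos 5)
Your proof is correct, and it shares the paper's broad strategy---treat $\Omega$ and $X$ as posets and reduce the adjunction to order inequalities verified with Lemmas 1(c)(i)--(iv) and the assumption $\alpha \leq \langle \alpha\cdot p = p\rangle$---but the decomposition is genuinely different. The paper uses the unit--counit presentation: it proves only the two inequalities $\alpha \leq G_pF_p(\alpha)$ and $F_pG_p(q) \leq q$ (the former by applying the assumption twice, at $\alpha$ and again at $\langle p = \alpha\cdot p\rangle$; the latter by essentially the same computation you give, rewriting $\langle p\leq q\rangle\cdot p$ as $\langle p = p\wedge q\rangle\cdot(p\wedge q)$ via well-definedness), and then discharges naturality and the triangle identities by noting that all diagrams in a poset commute. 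You instead establish the hom-set (Galois connection) form $\alpha\cdot p \leq q \iff \alpha \leq \langle p\leq q\rangle$; its forward implication is strictly stronger than the paper's unit inequality (which is the special case $q = \alpha\cdot p$), and your derivation of it---$\alpha\cdot p \leq p\wedge q$ from Proposition 1(c)(iv), the idempotency bootstrap $\alpha\cdot p = \alpha\cdot(\alpha\cdot p) \leq \alpha\cdot(p\wedge q)$ giving $\alpha\cdot p = \alpha\cdot(p\wedge q)$, then the assumption applied at $p\wedge q$ combined with symmetry and transitivity of $\langle\,\cdot = \cdot\,\rangle$---is a calculation that does not appear in the paper. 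What your route buys: monotonicity of $G_p$ (needed for it to be a functor at all, and left implicit in the paper) falls out of the biconditional, and the adjunction is exhibited directly as a bijection of hom-sets; what the paper's route buys is brevity, since the unit inequality is a shorter computation and the poset structure makes every remaining verification automatic. Both arguments are sound.
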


\begin{proof}
Fix $p \in X$.
\\ It is enough for us to give the unit $\eta : id_\Omega \rightarrow G_pF_p$ and counit $\epsilon : F_p G_p \rightarrow id_X$ of the adjunction.

\noindent Let us first show that $\alpha \leq G_pF_p(\alpha)$ for each $\alpha \in \Omega$, and $F_pG_p(q) \leq q$ for each $q \in X$.

\begin{itemize}
    \item $\alpha \leq G_pF_p(\alpha)$ for each $\alpha \in \Omega$.
    \\ We have
\begin{align*} 
\alpha \leq & \langle p = \alpha \cdot p \rangle &&\text{by assumption (\ref{alphaassumption});}\\
\leq & \langle \langle p = \alpha \cdot p \rangle \cdot p = p \rangle &&\text{by assumption (\ref{alphaassumption});}\\
= & \langle p= \langle p = \alpha \cdot p \rangle \cdot p  \rangle \\
= & \langle p \leq \alpha \cdot p \rangle && \text{by the first characterisation of $ p \leq \alpha \cdot p $; } \\
= &  G_p(\alpha \cdot p)\\
= &  G_pF_p(\alpha)\\
\end{align*}
    \item $F_pG_p(q) \leq q$ for each $q \in X$
    \\We have
\begin{align*} 
F_pG_p(q) = & F_p (\langle p \leq q \rangle)\\
= & \langle p \leq q \rangle \cdot p\\
= & \langle p = p \land q \rangle \cdot p &&\text{by the first or second characterisation of $p \leq q $;} \\
= & \langle p = p \land q \rangle \cdot (p \land q) &&\text{the action is well-defined;} \\
\leq &  p \land q &&\text{by the third characterisation of $\langle p = p \land q \rangle \cdot (p \land q) \leq p \land q $;}\\
\leq & q.
\end{align*}
\end{itemize}

\noindent This means we have a collection of maps $\eta_\alpha : \alpha \rightarrow G_pF_p(\alpha)$ in $\Omega$, and a collection of maps $\epsilon_q : F_pG_p(q) \rightarrow q$ in $X$. These respectively give us our unit $\eta$ and counit $\epsilon$ of the adjunction. Indeed, since all diagrams commute in a poset, we immediately have naturality of $\eta$ and of $\epsilon$, and also that they satisfy the triangle identities
$$\epsilon_{F_p\alpha} \circ F_p\eta_\alpha = id_{F_p \alpha} \qquad \text{and} \qquad G_p\epsilon_q \circ \eta_{G_pq} = id_{G_pq}.$$
\end{proof}

\noindent Write $\langle p \in Y \rangle = \cup_{z \in Y} \langle z=p \rangle$, where $\cup$ is the lattice join operation on $\Omega$, which is just set-theoretic union. (The union of downwards-closed sets is again downwards-closed.)

\begin{prop}[1(c)(vi)]
Any bounded subset $Y$ of $X$ satisfies
$$\text{sup } Y = \langle p \in Y \rangle \cdot p$$
for any upper bound $p$ of $Y$.
\end{prop}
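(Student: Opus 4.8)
The plan is to show that $s := \langle p \in Y \rangle \cdot p$ is the least upper bound of $Y$, for any fixed upper bound $p$ of $Y$. I would first verify that $s$ is an upper bound: for each $z \in Y$ I want $z \leq s$. Since $p$ is an upper bound, $z \leq p$, so by the third characterisation of $\leq$ we can write $z = \beta \cdot p$ for some $\beta \in \Omega$, and by assumption~(\ref{alphaassumption}) together with the symmetry axiom we get $\beta \leq \langle z = p\rangle = \langle p = z \rangle \leq \langle p \in Y\rangle$. Then $z = \beta \cdot p = (\beta \land \langle p \in Y\rangle) \cdot p = \beta \cdot (\langle p \in Y\rangle \cdot p) = \beta \cdot s$, which by the third characterisation gives $z \leq s$. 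This shows $s$ is an upper bound of $Y$.

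Next I would show $s$ is below any other upper bound $r$ of $Y$. This is where I expect the main obstacle: I need to control the interaction between the join $\langle p \in Y\rangle = \bigcup_{z \in Y}\langle z = p\rangle$ and the action, whereas the $F_p \dashv G_p$ adjunction and all the lemmas so far were phrased via meets. The key point to extract is a form of distributivity / Frobenius-type identity, namely $\bigl(\bigcup_{z}\langle z=p\rangle\bigr)\cdot p = \sup_{z}\bigl(\langle z=p\rangle \cdot p\bigr)$ inside the poset $X$ --- i.e. that $F_p$ preserves this particular join. Since $F_p$ is a left adjoint (Proposition 1(c)(v)) it preserves all colimits that exist, and joins in the poset $\Omega$ are colimits; so $F_p\bigl(\bigcup_z \langle z = p\rangle\bigr)$ is the supremum in $X$ of the $F_p\langle z = p\rangle = \langle z = p\rangle \cdot p$. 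But for $z \leq p$ we have $\langle z = p \rangle \cdot p = z$ by the second characterisation of $z \leq p$ (with the roles arranged as $z = \langle z = p\rangle \cdot p$), so this supremum is exactly $\sup Y$. Hence $s = \sup Y$ directly, provided $\sup Y$ exists --- and the adjunction argument shows it does, since $F_p$ carries the join in $\Omega$ to a genuine supremum in $X$.

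To make the adjunction step rigorous I would spell out: $r$ is an upper bound of $Y$ iff $z \leq r$ for all $z \in Y$; by the adjunction $F_p \dashv G_p$, $F_p(\langle z = p\rangle) \leq r$ iff $\langle z = p \rangle \leq G_p(r) = \langle p \leq r\rangle$, so $z = F_p\langle z=p\rangle \leq r$ for all $z$ iff $\langle z = p\rangle \leq \langle p \leq r\rangle$ for all $z$ iff $\bigcup_z \langle z = p\rangle \leq \langle p \leq r\rangle$ (this last equivalence is just the universal property of the join in $\Omega$) iff $F_p\bigl(\bigcup_z \langle z=p\rangle\bigr) \leq r$, i.e. iff $s \leq r$. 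Thus $s$ is below every upper bound of $Y$ and is itself an upper bound, so $s = \sup Y$. The one subtlety to double-check is that the index set ranges over $z \in Y$ with each such $z$ satisfying $z \leq p$ (true since $p$ is an upper bound), which is exactly what licenses the rewriting $\langle z=p\rangle \cdot p = z$; without $z \leq p$ the term $\langle z = p\rangle \cdot p$ need not equal $z$, but we never need that case.
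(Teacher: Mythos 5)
Your proposal is correct, but the second half runs along a genuinely different line from the paper. For the ``least'' part the paper only uses the counit inequality $F_pG_p(q)\leq q$ from Proposition 1(c)(v), and then proves by hand, from the listed axioms on $\langle\,\cdot=\cdot\,\rangle$ (symmetry, transitivity, and $\langle y=y\land q\rangle=1$ when $y\leq q$), that $\langle y=p\rangle\leq\langle p\leq q\rangle$ for each $y\in Y$, so that $\langle p\in Y\rangle\leq\langle p\leq q\rangle$ by the universal property of the join and monotonicity in the first variable finishes the job. You instead invoke the full Galois-connection form of $F_p\dashv G_p$ (equivalently, that the left adjoint $F_p$ preserves existing joins), which collapses both halves into one chain of equivalences: $r$ bounds $Y$ iff $\langle z=p\rangle\leq G_p(r)$ for all $z$ iff $\langle p\in Y\rangle\leq G_p(r)$ iff $\langle p\in Y\rangle\cdot p\leq r$, using $z=\langle z=p\rangle\cdot p$ for $z\leq p$ (second characterisation), which you correctly flag as the place where the hypothesis that $p$ is an upper bound is needed. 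Your upper-bound half also differs mildly but harmlessly: the paper routes through the meet $y\land p=\langle y=p\rangle\cdot p$ of 1(c)(iv) and monotonicity in the first argument, while you use the third characterisation plus the assumption $\alpha\leq\langle\alpha\cdot p=p\rangle$ (your appeal to symmetry there is not even needed, since $\langle z=p\rangle$ is literally one of the joinands of $\langle p\in Y\rangle$). What each approach buys: yours is shorter, more conceptual, and yields existence of $\sup Y$ as an automatic by-product of join preservation; the paper's computation is more self-contained, since the hom-bijection form of the adjunction implicitly requires monotonicity of $G_p$ (i.e.\ $q\leq q'$ implies $\langle p\leq q\rangle\leq\langle p\leq q'\rangle$), which 1(c)(v) asserts by calling $G_p$ a functor but never verifies explicitly --- the paper's hand computation of $\langle y=p\rangle\leq\langle p\leq q\rangle$ is in effect the elementary substitute for exactly that step. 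Since you are entitled to cite 1(c)(v) as stated, your argument stands.
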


\begin{proof}
We want to show that for each $y \in Y$, we have $y \leq \langle p \in Y \rangle \cdot p$, and furthermore, any upper bound $q$ of $Y$ satisfies $\langle p \in Y \rangle \cdot p \leq q$.

\begin{itemize}
    \item for each $y \in Y$, $y \leq \langle p \in Y \rangle \cdot p$.
    \\ Observe that $y \leq y$ (by reflexivity of $\leq$) and $y \leq p$ (as $p$ is an upper bound for $Y$), so we have $y \leq y \land p$. Therefore,
\begin{align*} 
y \leq &   y \land p \\
= &   \langle y=p \rangle \cdot p \\
\leq & ( \cup_{z \in Y} \langle z=p \rangle ) \cdot p &&\text{the action is order-preserving in the first variable;}\\
= & \langle p \in Y \rangle \cdot p.
\end{align*}

    \item if $y \leq q$ for each $y$, then $( \cup_{z \in Y} \langle z=p \rangle ) \cdot p \leq q$.
    \\ By the proposition above, we know that $\langle p \leq q \rangle \cdot p = F_pG_p(q) \leq q$, so it will be enough to show that $$( \cup_{z \in Y} \langle z=p \rangle ) \cdot p \leq  \langle p \leq q \rangle \cdot p.$$
    In fact, we only need to show that $$\cup_{z \in Y} \langle z=p \rangle \leq \langle p \leq q \rangle ,$$
    since the action is order-preserving in the first variable.
    \\
    
    \noindent Let us now show that $\langle y = p \rangle \leq \langle p \leq q \rangle$ for each $y \in Y$.
    \\ (Then we would be done, by the universal property of the join.)
    \\ First note that $\langle y = y \land q \rangle =1$, since
\begin{align*} 
1 \leq & \langle 1 \cdot y = y \rangle &&\text{by assumption (\ref{alphaassumption});} \\
= & \langle y=y \rangle \\
= & \langle y = y \land q \rangle &&\text{$y = y \land q$ since $y \leq q$;}\\
\leq & 1. &&\text{$1$ is the greatest element of the lattice.} \\
\end{align*}    

Hence, we have
\begin{align*} 
\langle p = y \rangle
= & \langle p = y \rangle  \land 1 \\
= & \langle p = y \rangle  \land \langle y = y \land q \rangle \\
\leq & \langle p = y \land q \rangle, \\
\end{align*}
and so,
\begin{align*}
\langle y = p \rangle
= & \langle p = y \rangle \\
\leq & \langle p = y \rangle  \land \langle y = p \land q \rangle  \\
\leq & \langle p = p \land q \rangle  \\
= & \langle p \leq q \rangle, \\
\end{align*}
as desired.
\end{itemize}
\end{proof}
\section{Monad morphisms}

Recall that for a monad $(T, \mu, \eta)$ on a category $\mathfrak{C}$, the objects of its Eilenberg-Moore category $\mathfrak{C}^T$ are pairs $(A, \sigma_A)$ consisting of an object $A$ in $\mathfrak{C}$ and a $\mathfrak{C}$-morphism $\sigma_A : TA \rightarrow A$ such that
$$\sigma_A \circ T \sigma_A = \sigma_A \circ \mu_A \ \ \ \  \text{and}  \ \ \ \  \sigma_A \circ \eta_A = id_A.$$

\noindent As such, the Eilenberg-Moore category of algebras for the monad comes equipped with a forgetful functor $U: \mathfrak{C}^T \rightarrow \mathfrak{C}$.
\\

\noindent For two categories $\mathfrak{C}_1$ and $\mathfrak{C}_2$, a monad morphism from $T_1$ to $T_2$ is a pair $(F, \theta)$ consisting of a functor $F:\mathfrak{C}_1 \rightarrow \mathfrak{C}_2$ and a natural transformation $\theta: T_2 F \implies FT_1$ such that
$$\theta \circ \mu_2F = F\mu_1 \circ \theta T_1 \circ T_2 \theta \ \ \ \ \ \ \ \text{ and } \ \ \ \ \ \ \ \theta \circ \eta_2F = F\eta_1.$$

\begin{prop}[2(a)(i)]
A monad morphism $(F, \theta)$ as above can be used to uniquely define a functor $\hat{F}$ between the corresponding categories of algebras such that the following diagram commutes:
\begin{equation}\label{2ai}
\begin{tikzcd}
\mathfrak{C}_1^{T_1} \arrow{r}{\hat{F}} \arrow[swap]{d}{U_1} & \mathfrak{C}_2^{T_2} \arrow{d}{U_2} \\
\mathfrak{C}_1 \arrow{r}{F} & \mathfrak{C}_1
\end{tikzcd} 
\end{equation}
\end{prop}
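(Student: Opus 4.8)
The plan is to define $\hat{F}$ on objects and morphisms explicitly, check the Eilenberg--Moore algebra axioms, verify functoriality, confirm commutativity of the square, and finally argue uniqueness. For an algebra $(A, \sigma_A) \in \mathfrak{C}_1^{T_1}$, the obvious candidate is $\hat{F}(A,\sigma_A) = (FA, F\sigma_A \circ \theta_A)$, where $\theta_A : T_2FA \rightarrow FT_1A$ is the component of $\theta$ at $A$; on a morphism $f : (A,\sigma_A) \rightarrow (B,\sigma_B)$ (i.e.\ a $\mathfrak{C}_1$-map $f : A \rightarrow B$ with $f \circ \sigma_A = \sigma_B \circ T_1 f$) we simply set $\hat{F}(f) = Ff$. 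The commutativity of square~(\ref{2ai}) is then immediate by construction: $U_2 \hat{F}(A,\sigma_A) = FA = F U_1 (A,\sigma_A)$ and likewise on morphisms, so that constraint is actually what \emph{forces} the definition on underlying objects and morphisms — this is the germ of the uniqueness argument.

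First I would check that $(FA, F\sigma_A \circ \theta_A)$ is genuinely a $T_2$-algebra. The unit law $F\sigma_A \circ \theta_A \circ \eta_2{}_{FA} = id_{FA}$ follows from the monad-morphism identity $\theta \circ \eta_2 F = F\eta_1$ (applied at $A$) together with the algebra unit law $\sigma_A \circ \eta_1{}_A = id_A$ and functoriality of $F$. The associativity law $F\sigma_A \circ \theta_A \circ \mu_2{}_{FA} = F\sigma_A \circ \theta_A \circ T_2(F\sigma_A \circ \theta_A)$ is the more involved diagram chase: one expands the right-hand side using $T_2(F\sigma_A \circ \theta_A) = T_2 F\sigma_A \circ T_2\theta_A$, inserts the naturality square for $\theta$ at the morphism $\sigma_A$ (namely $\theta_A \circ T_2 F\sigma_A = FT_1\sigma_A \circ \theta_{T_1 A}$), then applies the first monad-morphism equation $\theta \circ \mu_2 F = F\mu_1 \circ \theta T_1 \circ T_2\theta$ at $A$, and finally the algebra associativity law $\sigma_A \circ T_1\sigma_A = \sigma_A \circ \mu_1{}_A$ under $F$. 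Next I would check that $\hat{F}(f) = Ff$ is an algebra morphism: the required identity $Ff \circ (F\sigma_A \circ \theta_A) = (F\sigma_B \circ \theta_B) \circ T_2 Ff$ unwinds using naturality of $\theta$ at $f$ (giving $\theta_B \circ T_2 Ff = FT_1 f \circ \theta_A$) and $F$ applied to the morphism condition $f \circ \sigma_A = \sigma_B \circ T_1 f$. Functoriality of $\hat{F}$ (preservation of identities and composites) is then inherited directly from that of $F$, since $\hat{F}$ acts as $F$ on underlying data.

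For uniqueness, suppose $\hat{F}'$ is another functor making~(\ref{2ai}) commute. Commutativity at the level of objects gives $U_2 \hat{F}'(A,\sigma_A) = FA$, so $\hat{F}'(A,\sigma_A) = (FA, \tau_A)$ for some structure map $\tau_A : T_2 FA \rightarrow FA$; similarly commutativity on morphisms forces $\hat{F}'(f) = Ff$. The only thing not yet pinned down is that $\tau_A = F\sigma_A \circ \theta_A$. The natural strategy here is to use the freeness of free algebras: every algebra $(A,\sigma_A)$ is a coequalizer of a canonical diagram of free algebras $(T_1^2 A, \mu_1{}_{T_1 A}) \rightrightarrows (T_1 A, \mu_1{}_A)$ with augmentation $\sigma_A$, and on free algebras $(T_1 A, \mu_1{}_A)$ the monad-morphism equations essentially force $\hat{F}'(T_1 A, \mu_1{}_A) = (FT_1 A, F\mu_1{}_A \circ \theta_{T_1 A})$; compatibility of $\hat{F}'$ with the coequalizer (which $U_2$ creates, or which one checks directly) then determines $\tau_A$ uniquely. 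I expect this uniqueness step to be the main obstacle — not because it is deep, but because making the ``free algebras generate everything'' argument fully rigorous requires either invoking that $U_2$ creates coequalizers of $U_2$-split pairs, or else a more hands-on argument; a cleaner alternative I would fall back on is to observe that for the \emph{free} algebra $(T_1 A, \mu_1 {}_A)$ the morphism $\sigma_A : T_1 A \to A$ is itself an algebra map $(T_1 A,\mu_1{}_A) \to (A,\sigma_A)$, apply $\hat F'$ to it, use commutativity and naturality, and solve for $\tau_A$ — everything else in the proposition is a routine, if slightly lengthy, sequence of diagram chases.
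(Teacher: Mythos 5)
Your construction and verification match the paper's proof almost line for line: you take $\hat{F}(A,\sigma_A)=(FA,\,F\sigma_A\circ\theta_A)$ and $\hat{F}(f)=Ff$, check the two Eilenberg--Moore axioms by the same chases (naturality of $\theta$ at $\sigma_A$, the two monad-morphism identities, the algebra laws for $(A,\sigma_A)$), check the algebra-morphism condition via naturality of $\theta$ at $f$, and observe that commutativity of the square forces the underlying data. All of that is correct and is exactly what the paper does.

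The gap is in your uniqueness step, and it is not a fixable technicality: the statement you set out to prove there is false in general. You want to show that \emph{any} functor $\hat{F}'$ with $U_2\hat{F}'=FU_1$ must have structure maps $\tau_A=F\sigma_A\circ\theta_A$. But liftings of $F$ along the forgetful functors correspond bijectively to monad morphisms $\theta:T_2F\Rightarrow FT_1$ (that is precisely the content of 2(a)(i) together with 2(a)(ii)), so two different choices of $\theta$ over the same $F$ yield two different functors both making the square commute. Concretely, take $\mathfrak{C}_1=\mathfrak{C}_2$ the category of sets, $T_1=T_2$ the list monad and $F=\mathrm{id}$: both the identity and list reversal are monad morphisms $T\Rightarrow T$, and the induced liftings are, respectively, the identity on monoids and the functor sending a monoid to its opposite monoid; both sit over $F=\mathrm{id}$. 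Your fallback arguments cannot close this, because they only reduce the problem to free algebras: writing $\hat{F}'(T_1A,\mu_{1A})=(FT_1A,\rho_A)$, the commutativity constraint fixes the carrier $FT_1A$ but not $\rho_A$, and indeed $\rho_A$ is exactly the data from which 2(a)(ii) extracts a possibly different monad morphism $\theta'_A=\rho_A\circ T_2F((\eta_1)_A)$; similarly, applying $\hat{F}'$ to $\sigma_A:(T_1A,\mu_{1A})\to(A,\sigma_A)$ only expresses $\tau_A$ in terms of the unknown $\rho_A$. The uniqueness actually asserted (and all the paper proves) is the weaker one: commutativity forces $U_2\hat{F}=FU_1$ on objects and morphisms, i.e. the carriers $FA$ and the arrows $Fh$, and the structure maps are then supplied by the \emph{given} $\theta$; so $\hat{F}$ is unique relative to the chosen monad morphism $(F,\theta)$, not as the sole functor making the square commute.
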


\begin{proof}
Given such a monad morphism $(F, \theta)$, define a functor $\hat{F}: \mathfrak{C}_1^{T_1} \rightarrow \mathfrak{C}_2^{T_2}$ as follows.
\\ Given a $T_1$-algebra $(A, \sigma_A)$, define $\hat{F} (A, \sigma_A) = (FA, \widetilde{\sigma_A})$, where $\widetilde{\sigma_A} =  F \sigma_A \circ \theta_A : T_2 FA \rightarrow FA$.
\\ Indeed, for the desired diagram to commute, the carrier of the algebra $\hat{F} (A, \sigma_A)$ \textit{has} to be $$U_2 \hat{F} (A, \sigma_A) = FU_1 (A, \sigma_A)= FA.$$
Let us check that our definition really gives us a $T_2$-algebra:
\begin{itemize}
    \item
    \[\begin{tikzcd}
T_2^2FA \arrow{r}{(\mu_2)_{ FA}} \arrow[swap]{d}{T_2(F\sigma_A \circ \theta_A)} & T_2FA \arrow{d}{F\sigma_A \circ \theta_A} \\
T_2FA \arrow{r}{F\sigma_A \circ \theta_A} & FA
\end{tikzcd}
\]

\begin{align*} 
& (F\sigma_A \circ \theta_A) \circ T_2(F\sigma_A \circ \theta_A) \\
=&  F\sigma_A \circ (\theta_A \circ T_2F\sigma_A) \circ T_2\theta_A \\
=& F\sigma_A \circ (FT_1\sigma_A\circ \theta_{T_1A}) \circ T_2\theta_A  && \text{by naturality of $\theta$;} \\
=& F(\sigma_A \circ T_1\sigma_A) \circ \theta_{T_1A} \circ T_2\theta_A \\
=& F(\sigma_A \circ (\mu_1)_A) \circ \theta_{T_1A} \circ T_2\theta_A && \text{first algebra axiom for $(A, \sigma_A)$;} \\
=& F\sigma_A \circ (F(\mu_1)_A \circ \theta_{T_1A} \circ T_2\theta_A) \\
=& F\sigma_A \circ (\theta_A \circ (\mu_2)_{ FA}) && \text{$(F,\theta)$ is a monad morphism;} \\
=& (F\sigma_A \circ \theta_A) \circ (\mu_2)_{ FA}. \\
\end{align*}

    \item
\[ \begin{tikzcd}
    FA \arrow{r}{(\eta_2)_{FA}} \arrow[swap]{dr}{id_{FA}} & T_2FA \arrow{d}{F\sigma_A \circ \theta_A
    } \\
     & FA
  \end{tikzcd}
\]

\begin{align*} 
& (F\sigma_A \circ \theta_A) \circ (\eta_2)_{FA} \\
=& F\sigma_A \circ (\theta_A \circ (\eta_2)_{FA}) \\
=& F\sigma_A \circ F(\eta_1)_A && \text{$(F,\theta)$ is a monad morphism;} \\
=& F(\sigma_A \circ (\eta_1)_A) \\
=& F(id_A) && \text{second algebra axiom for $(A, \sigma_A)$;}\\
=& id_{FA} \\
\end{align*}

\end{itemize}

\noindent We still have to define $\hat{F}$ on morphisms in $\mathfrak{C_1}^{T_1}$.
\\ Given a homomorphism of $T_1$-algebras $(A,\sigma_A) \xrightarrow{h} (B,\sigma_B)$, define $\hat{F}(h): (FA, F\sigma_A \circ \theta_A) \rightarrow (FB, F\sigma_B \circ \theta_B)$ as the homomorphism of $T_2$-algebras $FA \xrightarrow{Fh} FB$.
\\ Indeed, for the desired diagram to commute, our choice of $\hat{F}(h)$ is defined uniquely: $$U_2 \hat{F} (h) = FU_1 (h)= Fh.$$
\\ Let us check that $Fh$ is really a homomorphism of $T_2$-algebras:

\[\begin{tikzcd}
T_2FA \arrow{r}{F\sigma_A \circ \theta_A} \arrow[swap]{d}{T_2Fh} & FA \arrow{d}{Fh} \\
T_2FB \arrow{r}{F\sigma_B \circ \theta_B} & FB
\end{tikzcd}
\]

\begin{align*} 
& Fh \circ (F\sigma_A \circ \theta_A) \\
=& F(h \circ \sigma_A) \circ \theta_A \\
=& F(\sigma_B \circ T_1h) \circ \theta_A  && \text{$h$ is a homomorphism of $T_1$-algebras;} \\
=& F\sigma_B \circ (FT_1h \circ \theta_A) \\
=& F\sigma_B \circ (\theta_B \circ T_2Fh)  && \text{naturality of $\theta$;} \\
=& (F\sigma_B \circ \theta_B) \circ T_2. \\
\end{align*}

Finally, let us check the two functoriality axioms:

\begin{itemize}
    \item for each $T_1$-algbera $(A,\sigma_A)$, we have $\hat{F}(id_{(A,\sigma_A)}) = F(id_A) = id_{FA} =id_{\hat{F}(A, \sigma_A)} $;
    \item if $(A,\sigma_A) \xrightarrow{h} (B,\sigma_B)$ and $(B,\sigma_B) \xrightarrow{k} (C,\sigma_C)$ are maps in $\mathfrak{C_1}^{T_1}$, then $$\hat{F}(k \circ h) = F(k \circ h) = F(k) \circ F(h) = \hat{F}(k) \circ \hat{F}(h).$$
\end{itemize}
\end{proof}

\begin{prop}[2(a)(ii)]
The converse also holds: each commutative diagram \eqref{2ai} gives rise to a natural transformation $\theta$ making $(F, \theta)$ into a monad morphism.
\end{prop}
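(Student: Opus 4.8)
The plan is to reconstruct $\theta$ by pushing the free $T_1$-algebras through $\hat F$. For an object $A$ of $\mathfrak{C}_1$, the free algebra is $(T_1A,(\mu_1)_A)$, and commutativity of \eqref{2ai} forces the carrier of $\hat F(T_1A,(\mu_1)_A)$ to be $U_2\hat F(T_1A,(\mu_1)_A)=FU_1(T_1A,(\mu_1)_A)=FT_1A$; so we may write $\hat F(T_1A,(\mu_1)_A)=(FT_1A,\xi_A)$ for a uniquely determined structure map $\xi_A:T_2FT_1A\to FT_1A$. For any $f:A\to B$ the morphism $T_1f$ is a $T_1$-algebra map $(T_1A,(\mu_1)_A)\to(T_1B,(\mu_1)_B)$ by naturality of $\mu_1$, so applying $\hat F$ and reading off underlying maps through \eqref{2ai} yields $FT_1f\circ\xi_A=\xi_B\circ T_2FT_1f$; that is, $\xi$ is natural in $A$. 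I then define
\[ \theta_A \;=\; \xi_A\circ T_2F(\eta_1)_A \;:\; T_2FA\longrightarrow FT_1A \]
(equivalently, $\theta$ is the $U_2$-image of the adjoint transpose of $F\eta_1:F\Rightarrow FT_1$ across the free--forgetful adjunction for $T_2$, which makes naturality automatic). Concretely, naturality of $\theta$ follows from naturality of $\xi$ together with naturality of $\eta_1$ and functoriality of $T_2$ and $F$.

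It then remains to verify the two monad-morphism axioms, the only ingredients being the algebra axioms for $(FT_1A,\xi_A)$, the monad unit laws for $T_1$, and naturality of $\eta_2$ and $\mu_2$. The unit axiom $\theta\circ\eta_2F=F\eta_1$ is quick: in $\theta_A\circ(\eta_2)_{FA}=\xi_A\circ T_2F(\eta_1)_A\circ(\eta_2)_{FA}$, commute $(\eta_2)_{FA}$ past $T_2F(\eta_1)_A$ by naturality of $\eta_2$ to obtain $\xi_A\circ(\eta_2)_{FT_1A}\circ F(\eta_1)_A$, and then $\xi_A\circ(\eta_2)_{FT_1A}=\mathrm{id}_{FT_1A}$ (unit axiom for the $T_2$-algebra $\xi_A$) leaves exactly $F(\eta_1)_A$.

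The multiplication axiom $\theta\circ\mu_2F=F\mu_1\circ\theta T_1\circ T_2\theta$ is the step I expect to demand the most care. On the left I would commute $(\mu_2)_{FA}$ past $T_2F(\eta_1)_A$ by naturality of $\mu_2$ and then apply the associativity axiom $\xi_A\circ(\mu_2)_{FT_1A}=\xi_A\circ T_2\xi_A$ for the algebra $\xi_A$, rewriting $\theta_A\circ(\mu_2)_{FA}$ as $\xi_A\circ T_2\theta_A$. On the right the key observation is that $(\mu_1)_A:(T_1^2A,(\mu_1)_{T_1A})\to(T_1A,(\mu_1)_A)$ is a $T_1$-algebra morphism, so $\hat F$ sends it to a $T_2$-algebra morphism $F(\mu_1)_A:(FT_1^2A,\xi_{T_1A})\to(FT_1A,\xi_A)$, i.e., $F(\mu_1)_A\circ\xi_{T_1A}=\xi_A\circ T_2F(\mu_1)_A$; unwinding $\theta_{T_1A}=\xi_{T_1A}\circ T_2F(\eta_1)_{T_1A}$, feeding in this relation, and collapsing $F(\mu_1)_A\circ F(\eta_1)_{T_1A}=F(\mathrm{id}_{T_1A})$ by a monad unit law reduces $F\mu_1\circ\theta T_1$ to $\xi_A$, so the right-hand side is again $\xi_A\circ T_2\theta_A$. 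The two sides therefore agree. The one thing to stay disciplined about throughout is keeping the several $T_1$- and $T_2$-algebra structures, and the functors $T_1,T_2,F$, straight while manipulating composites.

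Finally I would remark that this $\theta$ is forced: running the construction of Proposition 2(a)(i) on it returns the original $\hat F$ (immediate on free algebras, and it propagates to all algebras by a naturality argument using the other monad unit law for $T_1$), so Propositions 2(a)(i) and 2(a)(ii) together set up a bijection between monad morphisms $(F,\theta):T_1\to T_2$ and commuting squares of the shape \eqref{2ai}.
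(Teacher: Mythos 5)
Your proposal is correct and follows essentially the same route as the paper: you define $\theta_A$ as the structure map of $\hat F(T_1A,(\mu_1)_A)$ precomposed with $T_2F(\eta_1)_A$, prove naturality via $\hat F(T_1f)=FT_1f$ being an algebra map, and verify the two axioms from the algebra laws for $\xi_A$, naturality of $\eta_2$ and $\mu_2$, the fact that $F(\mu_1)_A$ is a $T_2$-algebra morphism, and the unit law $(\mu_1)_A\circ(\eta_1)_{T_1A}=\mathrm{id}$ --- exactly the ingredients the paper uses, with only the cosmetic difference that you reduce both sides of the multiplication axiom to the common expression $\xi_A\circ T_2\theta_A$ rather than transforming one side into the other in a single chain. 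Your closing remark on the bijection with squares \eqref{2ai} goes beyond what the proposition asks but is a sound observation.
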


\begin{proof}
Suppose we have a functor $\hat{F}: \mathfrak{C_1}^{T_1} \rightarrow \mathfrak{C_2}^{T_2}$ such that $FU_1 = U_2 \hat{F}$. We construct a natural transformation $\theta:T_2F \implies FT_1$ as follows. Apply $\hat{F}$ to the free $T_1$-algebra $(T_1A, (\mu_1)_A)$ to get a $T_2$-algebra $(FT_1A, \widetilde{(\mu_1)_A})$. Then, set
$$\theta_A = \widetilde{(\mu_1)_A} \circ T_2F((\eta_1)_A):T_2FA \rightarrow FT_1A$$ for each $A \in \mathfrak{C_1}$.
\\

\noindent Note that for each homomorphism of $T_1$-algebras $h$, we again know what $\hat{F}(h)$ must be. It is given by $$U_2 \hat{F} (h) = FU_1 (h)= Fh.$$
\noindent Now, let us check naturality of $\theta$: for each $A \xrightarrow{f} B$ in $\mathfrak{C_1}$,

\[\begin{tikzcd}
T_2FA \arrow{r}{T_2Ff} \arrow[swap]{d}{\theta_A} & T_2FB \arrow{d}{\theta_B} \\
FT_1A \arrow{r}{FT_1f} & FT_1B
\end{tikzcd}
\]

\begin{align*} 
& FT_1f \circ \theta_A \\
=& (FT_1f \circ \widetilde{(\mu_1)_A}) \circ T_2F((\eta_1)_A)\\
=& (\widetilde{(\mu_1)_B} \circ T_2FT_1f ) \circ T_2F((\eta_1)_A) && \text{$T_1f$ is a hom of (free) $T_1$-algebras, so $\hat{F}T_1f=FT_1f$ is a hom of $T_2$-algebras;}\\
=& \widetilde{(\mu_1)_B} \circ T_2F(T_1f  \circ (\eta_1)_A) \\
=& \widetilde{(\mu_1)_B} \circ T_2F((\eta_1)_B  \circ f)  && \text{naturality of $\eta_1$;}\\
=& \theta_B \circ T_2Ff
\end{align*}

\noindent Next, let us verify that $(F,\theta)$ is indeed a monad morphism:

\begin{itemize}
    \item $\theta \circ \mu_2F = F\mu_1 \circ \theta T_1 \circ T_2 \theta$:
    \begin{align*} 
& \theta_A \circ (\mu_2)_{FA} \\
=& \widetilde{(\mu_1)_A} \circ T_2F((\eta_1)_A) \circ (\mu_2)_{FA} \\
=& \widetilde{(\mu_1)_A} \circ  (\mu_2)_{FT_1A} \circ T_2^2F((\eta_1)_A)  && \text{naturality of $\mu_2$;}\\
=& \widetilde{(\mu_1)_A} \circ T_2 \widetilde{(\mu_1)_A} \circ T_2^2F((\eta_1)_A) && \text{first algebra axiom for $(FT_1A, \widetilde{(\mu_1)_A})$;}\\
=& \widetilde{(\mu_1)_A} \circ T_2F((\mu_1)_A  \circ (\eta_1)_{T_1A}) \circ T_2 \widetilde{(\mu_1)_A} \circ T_2^2F((\eta_1)_A) && \text{$(\mu_1)_A  \circ (\eta_1)_{T_1A}=id_{T_1A}$, by the unit monad axiom}\\
& && \text{for $T_1$;}\\
=& \widetilde{(\mu_1)_A} \circ T_2F((\mu_1)_A)  \circ T_2F((\eta_1)_{T_1A}) \circ T_2 \widetilde{(\mu_1)_A} \circ T_2^2F((\eta_1)_A)\\
=& F((\mu_1)_A) \circ \widetilde{(\mu_1)_{T_1A}} \circ T_2F((\eta_1)_{T_1A}) \circ T_2 \widetilde{(\mu_1)_A} \circ T_2^2F((\eta_1)_A)  && \text{$\hat{F}((\mu_1)_A)=F((\mu_1)_A)$ is a hom of $T_2$-algebras,}\\
& && \text{because $(\mu_1)_A$ is a hom of (free) $T_1$-algebras,}\\
& && \text{by the associativity monad axiom for $T_1$;}\\
=& F((\mu_1)_A) \circ \theta_{T_1A} \circ T_2 \theta_A
\end{align*}
    \item $\theta \circ \eta_2F = F\eta_1$:
    \begin{align*} 
& \theta_A \circ (\eta_2)_{FA} \\
=& \widetilde{(\mu_1)_A} \circ T_2F((\eta_1)_A) \circ (\eta_2)_{FA} \\
=& \widetilde{(\mu_1)_A} \circ (\eta_2)_{FT_1A} \circ F((\eta_1)_A) && \text{naturality of $\eta_2$;}\\
=& id_{FT_1A} \circ F((\eta_1)_A) && \text{second algebra axiom for $(FT_1A, \widetilde{(\mu_1)_A})$;}\\
=& F((\eta_1)_A) \\
\end{align*}
\end{itemize}

\end{proof}

\begin{prop}[2(a)(iii)]
If $F$ is faithful so is $\hat{F}$.
\end{prop}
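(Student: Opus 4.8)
The plan is to unwind the definition of $\hat{F}$ on morphisms, which was pinned down in Proposition 2(a)(i): for a homomorphism of $T_1$-algebras $h:(A,\sigma_A)\to(B,\sigma_B)$, the commuting square $U_2\hat{F}=FU_1$ forces $U_2\hat{F}(h)=FU_1(h)=Fh$, i.e. $\hat{F}(h)$ is the $T_2$-algebra homomorphism whose underlying $\mathfrak{C}_2$-arrow is $Fh$. So $\hat{F}$ acts on hom-sets essentially by applying $F$ and then (trivially) forgetting; faithfulness of $\hat{F}$ should reduce immediately to faithfulness of $F$ together with the fact that $U_1, U_2$ are themselves faithful.

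Concretely, I would argue as follows. Suppose $h,k:(A,\sigma_A)\to(B,\sigma_B)$ are two $T_1$-algebra homomorphisms with $\hat{F}(h)=\hat{F}(k)$. Applying $U_2$ and using the square, $Fh = U_2\hat{F}(h) = U_2\hat{F}(k) = Fk$ as arrows $FA\to FB$ in $\mathfrak{C}_2$. Since $F$ is faithful and $h,k$ are (the underlying arrows of) parallel morphisms $A\to B$ in $\mathfrak{C}_1$, we conclude $h=k$ in $\mathfrak{C}_1$; but a homomorphism of $T_1$-algebras is determined by its underlying $\mathfrak{C}_1$-arrow (equivalently, $U_1$ is faithful), so $h=k$ as morphisms in $\mathfrak{C}_1^{T_1}$. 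Hence $\hat{F}$ is faithful.

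There isn't really a hard part here: the only thing to be careful about is the bookkeeping distinction between a morphism in the Eilenberg--Moore category and its underlying arrow in the base category, i.e. the (routine) observation that $U_1$ and $U_2$ are faithful, so that equality of underlying arrows is equality of algebra homomorphisms. I would state that observation explicitly and then the result drops out in two lines. If one wanted to be even more economical, one could simply note that on morphisms $\hat{F}$ factors as $U_1$ followed by $F$ followed by a section of $U_2$ (the identity-on-underlying-arrows map), and a composite whose relevant factor $F$ is faithful and whose other factors are injective-on-arrows is faithful.
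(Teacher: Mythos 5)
Your proof is correct and follows essentially the same route as the paper: apply $U_2$ and the square $U_2\hat{F}=FU_1$ to reduce $\hat{F}h=\hat{F}k$ to $Fh=Fk$, invoke faithfulness of $F$, and note that equality of underlying arrows gives equality of algebra homomorphisms. Your explicit remark that $U_1$ is faithful is the same observation the paper makes in passing (``as maps in $\mathfrak{C}_1$, so also as maps in $\mathfrak{C}_1^{T_1}$'').
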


\begin{proof}
Let $F$ be faithful, meaning given any two objects $A,B$ of $\mathfrak{C}_1$ and a map $FA \xrightarrow{g} FB$ in $\mathfrak{C}_2$, there is at most one map $A \xrightarrow{f} B$ in $\mathfrak{C}_1$ such that $g=Ff$.
\\

\noindent Suppose we are given two $T_1$-algebras $(A, \sigma_A), (B, \sigma_B)$, a homomorphism of $T_2$-algebras $(FA, \widetilde{\sigma_A}) \xrightarrow{k} (FB, \widetilde{\sigma_B})$, and two homomorphisms of $T_1$-algebras $(A, \sigma_A) \xrightarrow{h_1,h_2} (B, \sigma_B)$ such that $\hat{F}h_1=k=\hat{F}h_2$. We must show that $h_1=h_2$.
\\ Well, $U_2k$ is a map $FA \rightarrow FB$ in $\mathfrak{C}_2$, so by faithfulness of $F$ there is at most of map $A \xrightarrow{f} B$ in $\mathfrak{C}_1$ such that $Ff=U_2k$. However, both $f=h_1$ and $f=h_2$ satisfy this equation:
$$Fh_i = \hat{F}h_i = k = U_2k \ \ (i=1,2).$$
Therefore, we must have $h_1=h_2$ (as maps in $\mathfrak{C}_1$, so also as maps in $\mathfrak{C}_1^{T_1}$).
\end{proof}

\begin{prop}[2(a)(iv)]
If $F$ is fully faithful and each component of $\theta$ is an epimorphism, then $\hat{F}$ is fully faithful.
\end{prop}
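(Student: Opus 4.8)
The plan is to handle faithfulness and fullness of $\hat{F}$ separately. Faithfulness comes for free: a fully faithful $F$ is in particular faithful, so Proposition 2(a)(iii) already gives that $\hat{F}$ is faithful. Hence all the work is in establishing fullness.

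For fullness I would fix $T_1$-algebras $(A,\sigma_A)$ and $(B,\sigma_B)$ and an arbitrary homomorphism of $T_2$-algebras $k:(FA, F\sigma_A\circ\theta_A)\to (FB, F\sigma_B\circ\theta_B)$, and aim to produce a $T_1$-algebra homomorphism $h:(A,\sigma_A)\to(B,\sigma_B)$ with $\hat{F}h=k$. The underlying arrow $U_2 k:FA\to FB$ is a $\mathfrak{C}_2$-morphism, so fullness of $F$ supplies some $f:A\to B$ in $\mathfrak{C}_1$ with $Ff=U_2 k$ (and faithfulness of $F$ makes this $f$ unique). The candidate is $h:=f$; what remains is to check that $f$ is genuinely a morphism of $T_1$-algebras, i.e. $f\circ\sigma_A=\sigma_B\circ T_1 f$.

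To verify this I would unwind the hypothesis that $k$ is a $T_2$-algebra homomorphism, which on underlying maps reads $Ff\circ F\sigma_A\circ\theta_A = F\sigma_B\circ\theta_B\circ T_2 Ff$. Rewriting the right-hand side with naturality of $\theta$ (namely $\theta_B\circ T_2 Ff = FT_1 f\circ\theta_A$) turns both sides into arrows precomposed with $\theta_A$, giving $F(f\circ\sigma_A)\circ\theta_A = F(\sigma_B\circ T_1 f)\circ\theta_A$. Now the epimorphism hypothesis on $\theta_A$ lets us cancel it to obtain $F(f\circ\sigma_A)=F(\sigma_B\circ T_1 f)$, and faithfulness of $F$ descends this to the desired equality $f\circ\sigma_A=\sigma_B\circ T_1 f$ in $\mathfrak{C}_1$. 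Once $f$ is known to be an algebra map, $\hat{F}f$ is a $T_2$-algebra homomorphism with underlying map $Ff=U_2 k$, and since the forgetful functor $U_2$ is faithful, $\hat{F}f=k$; together with the first paragraph this shows $\hat{F}$ is fully faithful.

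I expect no serious computational obstacle here; the main point to get right is conceptual — recognizing that the two hypotheses play complementary roles (fullness of $F$ produces the arrow $f$, while the epimorphism condition on $\theta_A$, combined with faithfulness of $F$, is exactly what promotes $f$ to an algebra homomorphism) — and being careful that the conclusion $\hat{F}f=k$ really follows from the underlying maps agreeing, which is where faithfulness of $U_2$ is used.
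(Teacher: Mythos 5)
Your proposal is correct and follows essentially the same route as the paper: faithfulness is inherited from Proposition 2(a)(iii), and fullness is obtained by lifting the underlying map of $k$ through fullness of $F$, then using naturality of $\theta$, the epimorphism property of $\theta_A$, and faithfulness of $F$ to show the lift is a $T_1$-algebra homomorphism. Your closing remark that $\hat{F}f=k$ because the underlying maps agree and $U_2$ is faithful is a slightly more careful justification of a step the paper states simply as $\hat{F}f=Ff=k$, but it is not a different argument.
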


\begin{proof}
Now we additionally assume $F$ is full, meaning given any two objects $A,B$ of $\mathfrak{C}_1$ and a map $FA \xrightarrow{g} FB$ in $\mathfrak{C}_2$, there is some map $A \xrightarrow{f} B$ in $\mathfrak{C}_1$ such that $g=Ff$.
\\

\noindent Suppose we are given two $T_1$-algebras $(A, \sigma_A), (B, \sigma_B)$ and a homomorphism of $T_2$-algebras $$(FA, \widetilde{\sigma_A}) \xrightarrow{k} (FB, \widetilde{\sigma_B}).$$
\\ By fullness of $F$, there is some map $A \xrightarrow{f} B$ in $\mathfrak{C}_1$ such that $U_2k=Ff : FA \rightarrow FB$.
\\

\noindent Let us show that $f$ is a homomorphism of $T_1$-algebras:

\[\begin{tikzcd}
T_1A \arrow{r}{\sigma_A} \arrow[swap]{d}{T_1f} & A \arrow{d}{f} \\
T_1B \arrow{r}{\sigma_B} & B
\end{tikzcd}
\]

\noindent Well, we have
\begin{align*} 
& F(f \circ \sigma_A) \circ \theta_A \\
=& Ff \circ F(\sigma_A) \circ \theta_A \\
=& F(\sigma_B) \circ \theta_B \circ T_2Ff \\
=& F(\sigma_B) \circ FT_1f \circ \theta_A \\
=& F(\sigma_B \circ T_1f) \circ \theta_A \\
\end{align*}

\noindent Since by assumption $\theta_A$ is epic, this implies $F(f \circ \sigma_A) =  F(\sigma_B \circ T_1f)$.
\\ Since we also assumed $F$ to be faithful, we have $f \circ \sigma_A = \sigma_B \circ T_1f$, as desired.
\\

\noindent By construction, $\hat{F}f=Ff=k$, so we have completed our proof that $\hat{F}$ is full.
\\ We already know by the result above that $\hat{F}$ is faithful, so we are done.

\end{proof}

\begin{thm}[2(a)(v)]
If $F$ is fully faithful and $\theta$ is an isomorphism, then $\hat{F}$ is the pullback of $F$ along $U_2$.
\end{thm}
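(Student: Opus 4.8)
The plan is to exhibit the commutative square \eqref{2ai} as a genuine pullback in $\mathbf{Cat}$ by showing that the canonical comparison functor into the strict pullback category is an isomorphism of categories. Form the pullback $\mathcal{P} := \mathfrak{C}_2^{T_2} \times_{\mathfrak{C}_2} \mathfrak{C}_1$: an object of $\mathcal{P}$ is a pair $(A,\sigma)$ with $A \in \mathfrak{C}_1$ and $\sigma : T_2FA \to FA$ a $T_2$-algebra structure on $FA$, and a morphism $(A,\sigma) \to (B,\tau)$ is a $\mathfrak{C}_1$-map $f : A \to B$ such that $Ff$ is a homomorphism $(FA,\sigma) \to (FB,\tau)$ of $T_2$-algebras (with projections to $\mathfrak{C}_2^{T_2}$ and $\mathfrak{C}_1$ the evident ones). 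Proposition 2(a)(i) together with $U_1$ induces the comparison functor $K : \mathfrak{C}_1^{T_1} \to \mathcal{P}$ sending $(A,\sigma_A) \mapsto (A, \widetilde{\sigma_A})$ with $\widetilde{\sigma_A} = F\sigma_A \circ \theta_A$, and a homomorphism $h$ to the $\mathcal{P}$-morphism with $\mathfrak{C}_1$-component $h$ (and $\mathfrak{C}_2^{T_2}$-component $Fh$). That $K$ commutes with both projections is exactly the content of the commuting square \eqref{2ai}, so it remains to check that $K$ is bijective on objects and fully faithful.

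For bijectivity on objects, injectivity is immediate: if $K(A,\sigma_A) = K(B,\sigma_B)$ then $A = B$ and $F\sigma_A \circ \theta_A = F\sigma_B \circ \theta_A$, whence $F\sigma_A = F\sigma_B$ because $\theta_A$ is epic (being invertible), and then $\sigma_A = \sigma_B$ because $F$ is faithful. For surjectivity, given $(A,\sigma) \in \mathcal{P}$, the composite $\sigma \circ \theta_A^{-1} : FT_1A \to FA$ lies in the image of $F$ by fullness, say $\sigma \circ \theta_A^{-1} = F\sigma_A$ for a (unique) $\sigma_A : T_1A \to A$; I then check that $(A,\sigma_A)$ is a $T_1$-algebra and that $\widetilde{\sigma_A} = \sigma$, so $K(A,\sigma_A) = (A,\sigma)$. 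The unit axiom $\sigma_A \circ (\eta_1)_A = \mathrm{id}_A$ follows, after applying the faithful $F$, from the monad-morphism identity $\theta \circ \eta_2F = F\eta_1$ and the $T_2$-unit axiom for $\sigma$. The associativity axiom follows, again after applying $F$, by rewriting $FT_1\sigma_A$ via naturality of $\theta$ and rewriting $F(\mu_1)_A$ via the monad-morphism identity $\theta \circ \mu_2F = F\mu_1 \circ \theta T_1 \circ T_2\theta$, so that both sides reduce to a common expression once the $T_2$-associativity axiom $\sigma \circ T_2\sigma = \sigma \circ (\mu_2)_{FA}$ is applied; it is essential here that $\theta_A$, $\theta_{T_1A}$ and $T_2\theta_A$ are all invertible, which is why the hypothesis on $\theta$ is strengthened to an isomorphism.

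For full faithfulness: faithfulness is trivial since $K$ acts as the identity on underlying $\mathfrak{C}_1$-maps. For fullness, a morphism $K(A,\sigma_A) \to K(B,\sigma_B)$ in $\mathcal{P}$ is a $\mathfrak{C}_1$-map $f : A \to B$ for which $Ff$ is a homomorphism $(FA,\widetilde{\sigma_A}) \to (FB,\widetilde{\sigma_B})$; expanding this equality, using naturality of $\theta$ to replace $\theta_B \circ T_2Ff$ by $FT_1f \circ \theta_A$, cancelling the epimorphism $\theta_A$, and applying faithfulness of $F$ yields $f \circ \sigma_A = \sigma_B \circ T_1f$, so $f$ is a $T_1$-homomorphism with $K(f)$ the given morphism — this is precisely the argument already carried out in Proposition 2(a)(iv). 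Hence $K$ is an isomorphism of categories, and the square \eqref{2ai} is a pullback of $F$ along $U_2$.

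I expect the object-surjectivity step — verifying that the structure map $\sigma_A$ lifted through the full functor $F$ genuinely satisfies the $T_1$-algebra associativity axiom — to be the main obstacle, since it is the one place where fullness and faithfulness of $F$ together with invertibility (rather than mere epimorphy) of each $\theta_A$ are all needed simultaneously, and where the monad-morphism square must be chased with $\theta^{-1}$ inserted.
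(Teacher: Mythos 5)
Your proof is correct, and it organizes the argument differently from the paper. The paper proves the pullback property directly: it takes an arbitrary test category $\mathfrak{D}$ with functors $G_1,G_2$ satisfying $FG_1=U_2G_2$ and constructs the unique mediating functor $G$ by hand, recovering the structure map as $F^{-1}(\widetilde{\sigma}\circ\beta_{G_1D})$ with $\beta=\theta^{-1}$, then verifying the algebra axioms, functoriality of $G$, the two commutation equations, and uniqueness (the last via full faithfulness of $F$ and of $\hat F$). You instead build the strict pullback category $\mathcal{P}=\mathfrak{C}_2^{T_2}\times_{\mathfrak{C}_2}\mathfrak{C}_1$ once and for all and show the canonical comparison $K:\mathfrak{C}_1^{T_1}\to\mathcal{P}$ is an isomorphism of categories commuting with the projections; since an isomorphism (not merely an equivalence) is exactly what the strict universal property requires, this does establish that the square \eqref{2ai} is a pullback in $\mathbf{Cat}$. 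The substance is the same in both routes --- your surjectivity-on-objects step (lifting $\sigma$ to $\sigma_A=F^{-1}(\sigma\circ\theta_A^{-1})$ and checking the unit law via $\theta\circ\eta_2F=F\eta_1$ and the associativity law via naturality of $\theta$, the identity $\theta\circ\mu_2F=F\mu_1\circ\theta T_1\circ T_2\theta$, and invertibility of $\theta_A$, $\theta_{T_1A}$, $T_2\theta_A$) is precisely the paper's ``existence of $G$'' computation, and your fullness step is the paper's Proposition 2(a)(iv) argument reused. What your packaging buys is economy: the lifting computations are done once for the universal pullback $\mathcal{P}$ rather than inside a chase over an arbitrary cone, and functoriality of the mediating functor together with uniqueness come for free from $K$ being an isomorphism, whereas the paper must verify these separately. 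What the paper's route buys is self-containedness: it never needs to know (or justify) the explicit description of pullbacks of categories, in particular the small point you use implicitly that a morphism of $\mathcal{P}$ is determined by its $\mathfrak{C}_1$-component because $U_2$ is faithful. Both arguments are sound.
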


\begin{proof}
Since $\theta$ is an isomorphism, there is some natural transformation $\beta : FT_1 \implies T_2F$ such that $\theta \circ \beta = id_{FT_1}$ and $\beta \circ \theta = id_{T_2F}$. In particular, for each $A$ we have $$\theta_A \circ \beta_A = id_{FT_1A},$$ hence each component $\theta_A$ is split epic, hence epic!
Therefore we are in the situation of Proposition 2(a)(iv) above, and $\hat{F}$ is fully faithful.
\\

\noindent We wish to show that

\[ \begin{tikzcd}
\mathfrak{C}_1^{T_1} \arrow{r}{\hat{F}} \arrow[swap]{d}{U_1} & \mathfrak{C}_2^{T_2} \arrow{d}{U_2} \\
\mathfrak{C}_1 \arrow{r}{F} & \mathfrak{C}_1
\end{tikzcd} 
\]

\noindent is a pullback square.
\\ It certainly commutes, so let us check that it is universal as such.
\\ Suppose we have another commutative square:

\[ \begin{tikzcd}
\mathfrak{D} \arrow{r}{G_2} \arrow[swap]{d}{G_1} & \mathfrak{C}_2^{T_2} \arrow{d}{U_2} \\
\mathfrak{C}_1 \arrow{r}{F} & \mathfrak{C}_1
\end{tikzcd} 
\]

\noindent We construct a unique functor $G: \mathfrak{D} \rightarrow \mathfrak{C}_1^{T_1}$ such that
$$U_1G=G_1 \qquad \text{and} \qquad \hat{F} G = G_2.$$

\begin{itemize}
    \item \textbf{Uniqueness of $G$}:
    \\ Suppose $G$ is a functor satisfying these conditions.
    \\ Fix $D \in \mathfrak{D}$. The carrier of the $T_1$-algebra $GD$ \textit{has} to be $G_1D$, since $U_1GD = G_1D$. Hence, write
    $$G(D) = (G_1D, \ T_1G_1D \xrightarrow{\sigma_{G_1D}} G_1D).$$
    We claim that $\sigma_{G_1D}$ is also uniquely determined.
    Apply $\hat{F}$ to this $T_1$-algebra, to get the $T_2$-algebra
    $$\hat{F}G D = G_2D = (U_2G_2D, \ T_2FG_1D \xrightarrow{\widetilde{\sigma_{G_1D}}} FG_1D),$$
    where $\widetilde{\sigma_{G_1D}} = F \sigma_{G_1D} \circ \theta_{G_1D}$.
    \\ Then, $\widetilde{\sigma_{G_1D}} \circ \beta_{G_1D}= F \sigma_{G_1D} \circ \theta_{G_1D} \circ \beta_{G_1D} =  F \sigma_{G_1D}$.
    \\
    
    This shows that $\sigma_{G_1D}$ is unique!
    \\ ($F$ is fully faithful, so there can only be one map $F^{-1}(\widetilde{\sigma_{G_1D}} \circ \beta_{G_1D})$ sent by $F$ to $\widetilde{\sigma_{G_1D}} \circ \beta_{G_1D}$, and we have just shown that $\sigma_{G_1D}$ is such a map.)
    \\
    
    We have shown that the action of $G$ on objects is uniquely determined. Next, let us show that its action on maps is also uniquely determined.
    \\ Let $D \xrightarrow{f} D'$ be a map in $\mathfrak{D}$. Then $\hat{F} G f = G_2 f$. This shows that $Gf$ is unique.
    \\ ($\hat{F}$ is fully faithful, so there can only be one map $\hat{F}^{-1}(G_2f)$ sent by $\hat{F}$ to $G_2f$, and we have just shown that $Gf$ is such a map.)
    
    \item \textbf{Existence of $G$}:
    \\ We know by the above what $G$ has to be, if it exists:
    \\ It must be given by $D \mapsto (G_1D, \ T_1G_1D \xrightarrow{\sigma_{G_1D} = F^{-1}(\widetilde{\sigma} \circ \beta_{G_1D})} G_1D)$, $(D \xrightarrow{f} D') \mapsto (GD \xrightarrow{\hat{F}^{-1}(G_2f)} GD')$,
    \\ where $\widetilde{\sigma}$ is the map given by $G_2D = (U_2G_2D, \ \widetilde{\sigma}) =(FG_1D, \widetilde{\sigma})$.
    \\ Let us show that this gives us a well-defined functor from $\mathfrak{D}$ to $\mathfrak{C}_1^{T_1}$.
    \\
    
    Note that $Gf=\hat{F}^{-1}(G_2f)$ is a homomorphism of $T_1$-algebras, \textit{by definition}. (It is the one that maps under $\hat{F}$ to the homomorphism of $T_2$-algebras $G_2f$.
    \\ Next, we see that $GD$ is really a $T_1$-algebra:
    \begin{itemize}
        \item $\sigma_{G_1D} \circ T_1 \sigma_{G_1D} = \sigma_{G_1D} \circ (\mu_1)_{G_1D}$:
        \\ Since $F$ is faithful, it is enough to check equality on $F$ applied to these maps.
        \\ We have
        \begin{align*}
            F(\sigma_{G_1D} \circ T_1 \sigma_{G_1D})
            =& F\sigma_{G_1D} \circ FT_1 \sigma_{G_1D} \\
            =& \widetilde{\sigma} \circ \beta_{G_1D} \circ FT_1 \sigma_{G_1D} \\
            =& \widetilde{\sigma} \circ T_2F\sigma_{G_1D} \circ \beta_{T_1G_1D} &&\text{by naturality of $\beta$;}\\
            =& \widetilde{\sigma} \circ T_2(\widetilde{\sigma} \circ \beta_{G_1D}) \circ \beta_{T_1G_1D} \\
            =& \widetilde{\sigma} \circ T_2\widetilde{\sigma} \circ T_2\beta_{G_1D} \circ \beta_{T_1G_1D} \\
            =& \widetilde{\sigma} \circ (\mu_2)_{FG_1D} \circ T_2\beta_{G_1D} \circ \beta_{T_1G_1D}\\
            =& \widetilde{\sigma} \circ \beta_{G_1D} \circ \theta_{G_1D} \circ (\mu_2)_{FG_1D} \circ T_2\beta_{G_1D} \circ \beta_{T_1G_1D} \\
            =& \widetilde{\sigma} \circ \beta_{G_1D} \circ F ((\mu_1)_{G_1D}) \circ \theta_{T_1G_1D} \circ T_2 \theta_{G_1D} \circ T_2\beta_{G_1D} \circ \beta_{T_1G_1D} &&\text{$\theta$ is a monad morphism;}\\
            =& \widetilde{\sigma} \circ \beta_{G_1D} \circ F ((\mu_1)_{G_1D}) \circ \theta_{T_1G_1D} \circ T_2 (\theta_{G_1D} \circ \beta_{G_1D}) \circ \beta_{T_1G_1D} \\
            =& \widetilde{\sigma} \circ \beta_{G_1D} \circ F ((\mu_1)_{G_1D}) \circ \theta_{T_1G_1D} \circ  \beta_{T_1G_1D} &&\text{$(\theta \circ \beta)_{G_1D} = id_{FT_1G_1D}$;}\\
            =& \widetilde{\sigma} \circ \beta_{G_1D} \circ F ((\mu_1)_{G_1D})  &&\text{$(\theta \circ \beta)_{T_1G_1D} = id_{FT_1^2G_1D}$;}\\
            =& F\sigma_{G_1D} \circ F ((\mu_1)_{G_1D}) \\
            =& F(\sigma_{G_1D} \circ (\mu_1)_{G_1D}).
        \end{align*}
        
        \item $\sigma_{G_1D} \circ (\eta_1)_{G_1D} = id_{G_1D}$:
        \\ Again it is enough to check equality on $F$ applied to these maps. We have
        \begin{align*}
            F(\sigma_{G_1D} \circ (\eta_1)_{G_1D})
            =& F\sigma_{G_1D} \circ F((\eta_1)_{G_1D}) \\
            =& \widetilde{\sigma} \circ \beta_{G_1D} \circ F((\eta_1)_{G_1D}) \\
            =& \widetilde{\sigma} \circ \beta_{G_1D} \circ \theta_{G_1D}
            \circ (\eta_2)_{FG_1D} &&\text{$\theta$ is a monad morphism}\\
            =& \widetilde{\sigma} \circ (\eta_2)_{FG_1D} \\
            =& id_{FG_1D} &&\text{second algebra axiom for $G_2D =(FG_1D, \widetilde{\sigma})$} \\
            =& F(id_{G_1D}). \\
        \end{align*}
    \end{itemize}

    Next, let us check functoriality of $G$:
    \begin{itemize}
        \item $G(id_D) = \hat{F}^{-1}(G_2 id_D)= \hat{F}^{-1}(id_{G_2D})= id_{GD}$,
        \\ where in the last step we are using that $\hat{F}$ is fully faithful, and that $\hat{F}id_{GD}= id_{\hat{F}GD} = id_{G_2D}$.
        \item Let $D \xrightarrow{f} D'$, $D' \xrightarrow{g} D''$ be maps in $\mathfrak{D}$.
        \\ Then $G(g \circ f) = \hat{F}^{-1}(G_2(g \circ f))= Gg \circ Gf$,
        \\ where in the last step we are using that $\hat{F}$ is fully faithful, and that
        $$\hat{F} (Gg \circ Gf) = \hat{F} Gg \circ \hat{F} Gf = G_2g \circ G_2 f = G_2(g \circ f).$$
    \end{itemize}
    
    Finally, we must check the universal property for pullbacks:
     \begin{itemize}
        \item $\hat{F} G = G_2$:
        \begin{align*}
            \hat{F}GD =&\hat{F}(G_1D, \ F^{-1}(\widetilde{\sigma} \circ \beta_{G_1D})) \\
            =&(FG_1D, \ F(F^{-1}(\widetilde{\sigma} \circ \beta_{G_1D}) ) \circ \theta_{G_1D}) \\
            =&(FG_1D, \ \widetilde{\sigma} \circ \beta_{G_1D}  \circ \theta_{G_1D}) \\
            =&(FG_1D, \ \widetilde{\sigma} )\\
            =& G_2D, \\
        \end{align*}
        so we have equality on objects.
        \\ On maps,
        $$\hat{F}G (D \xrightarrow{f} D') = \hat{F} \hat{F}^{-1}(G_2f) = G_2f.$$
        \\
        
        \item $U_1G = G_1$:
        \begin{align*}
            U_1G(D) = G_1D, \\
        \end{align*}
        so we have equality on objects.
        \\ On maps,
        $$U_1G(D \xrightarrow{f} D') = U_1 \hat{F}^{-1}(G_2f) = G_1f,$$
        where the last equality follows from the equality of $F$ applied to these maps:
        $$FU_1 \hat{F}^{-1}(G_2f) = U_2 \hat{F} \hat{F}^{-1}(G_2f)= U_2G_2f= FG_1f.$$
        
    \end{itemize}
    
\end{itemize}
\end{proof}
\newpage
\section{Modelling non-determinism with monads}
\begin{essay}[2(b)] Monad are commonly used to model non-deterministic procedures. This can be implemented effectively in Haskell (or indeed other functional programming languages).
\\

\noindent Non-determinism of an algorithm just means that at each stage there are several possible outputs that can be taken as input of the next stage. For instance, non-deterministic finite state automata have transition functions of the form
$\delta : Q \times \Sigma \cup \epsilon \rightarrow \mathcal{P} (Q)$, where $Q$ is the set of states, and $\Sigma$ is the alphabet. At each stage the machine may transition to several (or no) states.

\noindent We give three brief instances of how monads model non-determinism. Since we would like to account for each possible branch of the computation, we use the list monad to form a collection of the outcomes. (The Powerset monad works fine, too --- see our closing remark.)
\\

\noindent Our first example demonstrates the extent to which monads are inbuilt into Haskell.
\\ Suppose we have a list of functions $[f_1, \cdots, f_n]$ on the same domain, and a list of elements $[x_1, \cdots, x_m]$ in the domain. We would like to evaluate each function on each element. We may execute these calculations `in parallel' with the command

\begin{lstlisting}
        ghci> (app) <$> [f1, ..., fn] <*> [x1, ..., xm]
\end{lstlisting}
where {\tt app} takes a function and an element and applies the former to the latter.
\\ The monad does not even appear explicitly, but we are actually using the list monad as an applicative.
\\ To make the monad structure more clear, we note that this is nothing more than the list comprehension
        $$\text{{\tt [f app x | f <- [ f1,..., fn ], x <- [x1,..., xm]]}},$$
\noindent In general, a comprehension has the form {\tt [t|q]}, where {\tt t } is a term, and {\tt q} a qualifier. A qualifier has one of the following forms:

\begin{itemize}
    \item the empty qualifier $\Lambda$;
    \item a generator \texttt{x $\leftarrow$ u}, for some variable \texttt{x} and list-valued term \texttt{\tt u};
    \item a composition {\tt (p,q)} of shorter qualifiers.
\end{itemize}

\noindent The point is that monads $(T, \eta, \mu)$ can be derived from comprehensions, and vice versa. We have:
\begin{itemize}
    \item {\tt [t|$\Lambda$]} = $\eta_t$;
    \item {\tt [t|x $\leftarrow$ u]} = $T(\lambda x. t)u$;
    \item {\tt [t|(p,q)]} = $\mu(${\tt [[t|q]|p]}$)$.
\end{itemize}
\noindent See Wadler (1992) for further details.
\\

\noindent A second example is the composition of multi-valued functions. If we take the $m^{th}$ root of a complex number, and then take the $n^{th}$ root of the result, we want this to be equal to the result of taking the $mn^{th}$ root. Hence we would like to return $mn$ possibilities in a list.
\\ Once again the list monad achieves this. We simply define:

        \begin{lstlisting}
        bind :: (Complex Double -> [Complex Double]) -> ([Complex Double] -> [Complex Double])
        bind f x = concat (map f x)
        
        unit :: Complex Double -> [Complex Double]
        unit x = [x]
        \end{lstlisting}

\noindent Here we have given the monad in Kleisli form, where the Kleisli extension is given by {\tt bind}, and the unit by {\tt unit}, of course.
\\

\noindent We end with another application of the list monad, to the modelling of a conditional probability problem (Taylor, 2013). Suppose we are given two boxes $A$ and $B$, each containing three marbles. The first has one white and two black marbles; the second has all three marbles white. We blindly select a box at random, and then from the box randomly extract a marble. If this is white, what is the probability that we selected the first box?
\\ Using Bayes' Theorem we may calculate this probabillity to be $\frac{(1/3)(1/2)}{4/6}=1/4$, but using the list monad we may model the scenario explicitly; such methods are valuable in applied statistics.

        \begin{lstlisting}
        
            data Box = BoxA | BoxB deriving (Show)
            
            data Marble = Black | White deriving (Eq,Show)
            
            extract BoxA   = [White, Black, Black]
            extract BoxB = [White, White, White]
            
            pick = [BoxA, BoxB]
            
            trial = do
              box   <- pick         -- Simulate picking a box at random
              result <- toss coin    -- Extract a marble and observe the result
              guard (result == White) -- We only proceed if the marble is white
              return box            -- Return which box this marble came from
              
              
            >> trial
        \\    [BoxA, BoxB, BoxB, BoxB]
        \end{lstlisting}

\noindent Let us briefly explain the code. First, we defined two data types that store the values of the boxes and the marbles. Then, we modelled the outcomes using a list. We next defined a function {\tt pick} that modelled the random selection of either box.
\\ Where the monad comes in is the {\tt do} block. 
\\ In Haskell,
        \begin{lstlisting}
        do { x1 <- action1
           ; x2 <- action2
           ; mk_action3 x1 x2 }
        \end{lstlisting}
is short for
        
        \begin{lstlisting}
        action1 >>= (\ x1 ->
          action2 >>= (\ x2 ->
            mk_action3 x1 x2 ))
        \end{lstlisting}

\noindent where {\tt >>=} is the bind combinator, i.e., Kleisli extension:
$$\text{{\tt (mx >>= f) : (TA, A $\rightarrow$ TB) $\rightarrow$ TB}}$$

\noindent Since the program outputs three instances of {\tt BoxB} and one of {\tt BoxA}, we conclude that the probability that we had chosen Box $A$, given that we have extracted a white marble, is $1/4$ as expected.

\noindent It is worth noting that the algorithm is not really randomly selecting boxes and extracting marbles; we are merely simulating the non-determinism of the process by listing all the outcomes --- that is the point, after all.
\\

\noindent As a closing remark, we note that the powerset monad also models non-determinism effectively, albeit with less structure than the list monad --- now we no longer keep track of duplicate entries, and elements of a list are returned having no order. Depending on the scenario it is useful to use one monad or the other. For instance, the powerset monad would not model our last example, since there we needed to keep track of duplicates. However, the powerset monad might better suit our first example if we wished to return values in no particular order and without repetitions.

\end{essay}
\newpage
\appendix
\section{Appendix: Propositional Logic, Intuitionistic Logic, Classical Logic}

\noindent The formal language \textbf{PL} (propositional logic) is described by its alphabet and formation rules:
\begin{itemize}
    \item the alphabet for \textbf{PL} consists of:
    \begin{enumerate}
        \item a collection $\{ \pi_i : i \in \mathbb{N} \}$ of symbols, called the \textbf{propositional variables};
        \item the symbols $\sim, \land, \lor, \supset$;
        \item the bracket symbols $)$, $($.
    \end{enumerate}
    \item we have the following formation rules for \textbf{PL}-sentences:
    \begin{enumerate}
        \item each propositional variable $\pi_i$ is a \textbf{PL}-\textbf{sentence}, or \textbf{formula};
        \item if $\alpha$ is a sentence, so is $(\sim \alpha)$;
        \item if $\alpha$ and $\beta$ are sentences, so are $(\alpha \land \beta), (\alpha \lor \beta), (\alpha \supset \beta)$.
    \end{enumerate}
\end{itemize}
\noindent Define $\Phi_0 := \{\pi_0, \pi_1, \pi_2, \ldots \}$ and $\Phi := \{ \alpha : \alpha \text{ is a \textbf{PL}-sentence} \}$.
\\

\noindent An \textbf{axiom system} is described by a collection of sentences (its \textbf{axioms}), and a collection of \textbf{inference rules}, which prescribe operations on sentences in order to derive new ones. A \textbf{proof sequence} is a finite sequence of sentences, each of which is either an axiom or derivable from earlier members of the sequence using an inference rule.
\\

\noindent The axiom system \textbf{CL} (classical logic) has axioms which are sentences of one of the form:
\begin{enumerate}
    \item $\alpha \supset (\alpha \land \alpha)$;
    \item $(\alpha \land \beta) \supset (\beta \land \alpha)$;
    \item $(\alpha \supset \beta) \supset ((\alpha \land \gamma) \supset (\beta \land \gamma))$;
    \item $((\alpha \supset \beta) \land (\beta \supset \gamma)) \supset (\alpha \supset \gamma))$;
    \item $\beta \supset (\alpha \supset \beta)$;
    \item $(\alpha \land (\alpha \supset \beta)) \supset \beta$;
    \item $\alpha \subset (\alpha \lor \beta)$;
    \item $(\alpha \lor \beta) \supset (\beta \lor \alpha)$;
    \item $((\alpha \supset \gamma) \land (\beta \supset \gamma)) \supset ((\alpha \lor \beta) \supset \gamma)$;
    \item $(\sim \alpha) \supset (\alpha \supset \beta)$;
    \item $((\alpha \supset \beta) \land (\alpha \supset \sim \beta)) \supset (\sim \alpha)$;
    \item $\alpha \lor (\sim \alpha)$.
\end{enumerate}
\noindent The system \textbf{CL} has one inference rule:
\\ \textbf{Modus Ponens.} From $\alpha$ and $\alpha \supset \beta$ we may derive $\beta$.
\\

\noindent We say $\alpha$ is a \textbf{CL}-\textbf{theorem}, and write $\vdash_{CL} \alpha$, if $\alpha$ is the last member of some proof sequence in \textbf{CL}.
\\

\noindent That last axiom $\alpha \lor (\sim \alpha)$ in \textbf{CL} is called the \textit{law of excluded middle}; if we remove this axiom, keeping all the other axioms and the Modus Ponens inference rule, then we obtain the axiom system \textbf{IL} (intuitionistic logic).
\\

\noindent \noindent We say $\alpha$ is an \textbf{IL}-\textbf{theorem}, and write $\vdash_{IL} \alpha$, if $\alpha$ is the last member of some proof sequence in \textbf{IL}.
\\
\newpage

\end{document}